\documentclass[12pt]{amsart}
\usepackage{a4wide,enumerate,xcolor}
\usepackage{amsmath,graphicx,comment,enumerate}
\usepackage{mathtools} 
\allowdisplaybreaks

\usepackage{enumitem}
\setlist[itemize]{label={$\bullet$}, leftmargin=30pt, itemsep=3pt}

\let\pa\partial
\let\na\nabla
\let\eps\varepsilon
\newcommand{\N}{{\mathbb N}}
\newcommand{\R}{{\mathbb R}}
\newcommand{\diver}{\operatorname{div}}

\newtheorem{theorem}{Theorem}
\newtheorem{lemma}[theorem]{Lemma}

\begin{document}

\title[A multispecies Keller--Segel system with volume filling]{Analysis of a multispecies cross-diffusion \\ Keller--Segel system with volume filling}

\author[N. Geltner]{Noah Geltner}
\address{Institute of Analysis and Scientific Computing, TU Wien, Wiedner Hauptstra\ss e 8--10, 1040 Wien, Austria}
\email{noah.geltner@tuwien.ac.at} 

\author[A. J\"ungel]{Ansgar J\"ungel}
\address{Institute of Analysis and Scientific Computing, TU Wien, Wiedner Hauptstra\ss e 8--10, 1040 Wien, Austria}
\email{juengel@tuwien.ac.at} 

\author[M. Zhang]{Mingyue Zhang}
\address{Institute of Analysis and Scientific Computing, TU Wien, Wiedner Hauptstra\ss e 8--10, 1040 Wien, Austria}
\email{mingyue.zhang@tuwien.ac.at} 

\date{\today}

\thanks{The authors acknowledge partial support from the Austrian Science Fund (FWF), grant 10.55776/PAT2687825, and from the Austrian Federal Ministry for Women, Science and Research and implemented by \"OAD, project MULT09/2025. This work has received funding from the European Research Council (ERC) under the European Union's Horizon 2020 research and innovation programme, ERC Advanced Grant NEUROMORPH, no.~101018153. 
For open-access purposes, the authors have applied a CC BY public copyright license to any author-accepted manuscript version arising from this submission.} 

\begin{abstract}
A chemotaxis-driven multiphase multispecies diffusion system, arising in the formation of vascular-like structures, is analyzed. The model couples porous-medium-type cross-diffusion equations for the volume fractions of the cellular components with multispecies Keller--Segel equations governing the chemoattractant concentrations, posed in a bounded domain with no-flux boundary conditions. The system is derived within a multiphase framework based on mass and force balance laws, together with a characterization of the mixture pressure gradient, which follows from the volume-filling constraint. The existence of a global weak solution, the weak--strong uniqueness property, the exponential decay to the constant steady state, and the vanishing diffusion limit are established. The existence proof extends the boundedness-by-entropy method to solution codomains that are bounded in some directions only, while the other results are based on various entropy estimates and uniform dissipation bounds. 
\end{abstract}

\keywords{Cross-diffusion systems, multispecies Keller--Segel equations, existence of weak solutions, boundedness-by-entropy method, weak--strong uniqueness, long-time behavior, vanishing diffusion limit.}  
 
\subjclass[2000]{35K51, 35K65, 35B40, 35Q92, 92C17.}

\maketitle


\section{Introduction}

The formation of vascular-like networks is a fundamental process in biological tissue development, tumor growth, and angiogenesis, involving the collective migration and interaction of multiple cell populations guided by chemotactic signaling \cite{BPSO24}. We investigate a multispecies cross-diffusion Keller--Segel system with volume-filling constraint arising from a multiphase modeling framework. The model describes the evolution of the volume fractions of interacting cellular components coupled to the dynamics of chemoattractant concentrations. Starting from mass and force balance laws for the different cell phases, we formally derive a cross-diffusion system with fluxes that contain a porous-medium degeneracy for the cellular volume fractions and a chemotactic drift term. The diffusion matrix incorporates exclusion effects induced by the saturation assumption. The resulting equations combine porous-medium-type degeneracies, chemotactic drift, and strongly coupled cross-diffusion mechanisms, leading to substantial analytical challenges. Such effects are particularly relevant in the description of vascular-like structure formation, where cell motion is constrained by crowding and mechanical interactions within the tissue environment. In this work, we formally derive and rigorously analyze the resulting cross-diffusion chemotaxis system.

\subsection{Model equations}

The volume fractions $u_0,\ldots,u_n$ of the cellular components (or phases) and the chemical signal concentrations $c_1,\ldots,c_g$ are governed by the equations
\begin{align}
  \pa_t u_i &= \diver\bigg(J_i-u_i\sum_{k=0}^n J_k\bigg),
  \quad J_i = \alpha_i u_i^{m_i-1}\na u_i + u_i\sum_{k=1}^g
  \chi_{ik}\na c_k, \label{1.u} \\
  \pa_t c_j &= D_j\Delta c_j - \lambda_j c_j
  + \sum_{k=0}^n b_{jk}u_k\quad\mbox{in }\Omega,\ t>0,\
  i=0,\ldots,n,\ j=1,\ldots,g, \label{1.c} 
\end{align}
together with the initial and no-flux boundary conditions
\begin{align}\label{1.ic}
  & u_i(0)=u_i^0,\quad c_j(0)=c_j^0\quad\mbox{in }\Omega, \quad
  i=0,\ldots,n,\ j=1,\ldots,g, \\
  &\bigg(J_i-u_i\sum_{k=0}^n J_k\bigg)\cdot\nu = \na c_j\cdot\nu = 0
  \quad\mbox{on }\pa\Omega,\ t>0, \label{1.bc} 
\end{align}
where $\Omega\subset\R^d$ ($d\ge 1$) is a bounded domain, $\alpha_i\ge 0$ and $D_j>0$ are diffusion coefficients, $m_i\ge 1$ denotes the porous-medium exponent, originating from the interphase pressure definition, $\chi_{ik}\in\R$ describes the sensitivity of the cellular components modeling direct aggregation ($\chi_{ik}<0$) or repulsion ($\chi_{ik}>0$) of the $i$th cell species, $b_{jk}\ge 0$ is the cell production rate, and $\lambda_j>0$ is the degradation rate of the signals. The volume fraction $u_0$ is often associated with the water phase, which may not diffuse ($\alpha_0=0$). The mixture is assumed to be saturated, so the volume-filling constraint $\sum_{i=0}^n u_i=1$ holds. This condition implies the incompressibility of the mixture, since the barycentric velocity $v=\sum_{i=0}^n(J_i-u_i\sum_{k=0}^n J_k)$ vanishes and is, in particular, divergence-free.

Equations \eqref{1.u} are derived formally from mass and force balances by using the multiphase approach of \cite{LKBJS06}. The viscous stress tensor of the $i$th phase consists of the phase-specific pressure, being the sum of the mixture pressure and the intraphase pressure in porous-medium-type form. The forces are given as the sum of interphase, viscous drag, and chemotactic forces. The volume-filling constraint allows us to remove the mixture pressure, which can be interpreted as a Lagrange multiplier. Then the fluxes can be computed explicitly, leading to the form \eqref{1.u} of the mass balance equations. Details are given in Appendix \ref{sec.deriv}.  

The originality of the present paper stems from the following key aspects. System \eqref{1.u}--\eqref{1.bc} combines several analytical features that have so far been studied largely in isolation, namely nonlinear cross diffusion, porous-medium degeneracies, strong coupling between all components, and a mixed volume-filling structure in which only part of the variables is subject to exclusion effects. The interaction of these mechanisms leads to a highly nonlinear diffusion system that falls outside the scope of existing entropy methods. The main analytical observation of this work is the proof that entropy methods remain sufficiently flexible to accommodate this level of complexity. To this end, we develop an extension of the boundedness-by-entropy method of \cite{Jue15} to cross-diffusion systems with partial volume filling, thereby considerably broadening the class of models accessible to this technique. In addition, the governing equations \eqref{1.u}--\eqref{1.c} arise naturally from mass and force balance laws within a multiphase framework, resulting in a thermodynamically consistent model endowed with an intrinsic entropy structure. This physical derivation not only provides a rigorous modeling foundation but also explains the origin of the Lyapunov functional that underpins the analytical framework developed in this paper and detailed in the following subsection.

\subsection{Entropy structure}

Equations \eqref{1.u}--\eqref{1.c} can be formulated as the cross-diffusion system
\begin{align*}
  \pa_t q_i = \diver\bigg(\sum_{j=0}^{n+g} 
  \bar A_{ij}(\bar q)\na q_j\bigg) + \bar r_i(\bar q), \quad
  i=0,\ldots,n+g,
\end{align*}
where $\bar q=(q_0,\ldots,q_{n+g})=(\bar u,c)=(u_0,\ldots,u_n,c_1,\ldots,c_g)$, $\bar A(\bar q)$ is the diffusion matrix, and $\bar r$ is the reaction term originating from the source terms in \eqref{1.c}. The key of our analysis is the observation that this system possesses an entropy structure. This means that there exists a so-called entropy density $h$ such that the matrix $\bar h''(\bar q)\bar A(\bar q)$ is positive definite, where $\bar h''$ is the Hessian of the entropy density. Because of the volume-filling constraint, the matrix $\bar h''(\bar q)\bar A(\bar q)$ in the variable $\bar q$ is positive definite only on a subspace of $\R^{n+1+g}$. This issue is overcome by replacing $u_0$ by $1-\sum_{i=1}^n u_i$ and to work with the variable $q=(u,c)=(u_1,\ldots,u_n,c_1,\ldots,c_g)$, similarly as in \cite{JuSt13}. Formulating $\bar h''(\bar q)\bar A(\bar q)$ in terms of the reduced variable $q$ removes the rank deficiency and yields a positive definite matrix on $\R^{n+g}$. In contrast to \cite{JuSt13}, we work with a variable that satisfies a {\em partial} volume-filling constraint. 

To make the entropy structure more explicit, we introduce 
\begin{align}\label{1.hbar}
    \bar h(\bar q) = \sum_{i=0}^n u_i(\log u_i-1)
  - \sum_{i=0}^n\sum_{j=1}^g\frac{\chi_{ij}}{D_j}u_ic_j
  + \frac{K}{2}\sum_{j=1}^g c_j^2.
\end{align}
This function is convex if $K>0$ is sufficiently large. A formal computation shows, along solutions to \eqref{1.u}--\eqref{1.bc}, that
\begin{align}\label{1.ei}
  \frac{d}{dt}&\int_\Omega\bar h(\bar q)dx
  + \int_\Omega(\na\bar q)^T:\bar h''(\bar q)\bar A(\bar q)
  \na\bar q dx \\
  &= -\int_\Omega\sum_{j=1}^g
  \bigg(\sum_{k=0}^n b_{jk}u_k - \lambda_jc_j\bigg)
  \bigg(\sum_{i=0}^n\frac{\chi_{ij}}{D_j}u_i - Kc_j\bigg)dx,
  \nonumber 
\end{align}
where $A:B$ is the Frobenius product between two matrices $A$ and $B$. 
Thanks to the volume-filling constraint, the volume fractions $u_i$ are bounded, which implies that the right-hand side of \eqref{1.ei} is bounded uniformly in $(u,c)$. The difficulty is to estimate the entropy production term (the second integral on the left-hand side of \eqref{1.ei}), since $\bar h''(\bar q)\bar A(\bar q)$ is positive semidefinite only. By estimating with the reduced variable $q$, we show that for sufficiently large $K$, the entropy production term is estimated, for some $C_0>0$, according to 
\begin{align}\label{1.ep}
  \int_\Omega(\na\bar q)^T:\bar h''(\bar q)\bar A(\bar q)\na\bar q dx
  \ge C_0\int_\Omega\bigg(\sum_{i=0}^n\frac{\alpha_i}{2}
  |\na u_i^{m_i/2}|^2 + \sum_{j=1}^g|\na c_j|^2\bigg)dx,
\end{align}
providing uniform gradient bounds for $u_i^{m_i/2}$ (if $\alpha_i >0$) and for $c_j$. 


\subsection{State of the art}

In the case of two phases and one signal ($n=g=1$), the choice $\alpha_0=\chi_{01}=b_{10}=0$ and $\alpha_1=1$ in equations \eqref{1.u}--\eqref{1.c} leads to a vanishing flux of the zeroth phase, $J_0=0$, and to the volume-filling Keller--Segel system for $u=u_1$, $c=c_1$:
\begin{equation}\label{1.uc}
\begin{aligned}
  \pa_t u &= \diver\big((1-u)u^{m_1-1}\na u
  + \chi_{11}(1-u)u\na c\big), \\
  \pa_t c &= D_1\Delta c - \lambda_1 c + b_{11}u
  \quad\mbox{in }\Omega,\ t>0,
\end{aligned}
\end{equation}
which was investigated in \cite{GJZ26}. The mathematical study of chemotaxis began with the pioneering contributions of Patlak in the 1950s \cite{Pat53} and Keller and Segel in the 1970s \cite{KeSe70}. Over the last decades, the Keller--Segel equations generated a substantial body of mathematical research. Rather than attempting a comprehensive review, we highlight only a few contributions, restricting ourselves to degenerate and volume-filling models.

A distinctive property of degenerate Keller--Segel models of the type
\begin{align*}
  \pa_t u = \diver\big(u^{m-1}\na u - u\na c\big), \quad
  \pa_t c = \Delta c - c + bu\quad\mbox{in }\Omega,\ t>0,
\end{align*}
is the finite speed of propagation. The existence of solutions for a similar system (not including degenerate diffusion at $u=0$) with the optimal range $m>2-2/d$ was proven in \cite{TaWi12}. The convergence to possibly nonconstant steady states for $m > 2$, and to the constant steady state in the case $m=2$, was established in \cite[Theorem 1.2]{Jia18} for sufficiently small $b>0$. Volume-filling Keller--Segel equations were derived from a lattice model in \cite[Sec.~3]{HiPa02}, but this class of models is different from \eqref{1.uc}. Combined degenerate and volume-filling models have been studied too; see, e.g., \cite{BKU07,WWW12}. Pattern formation for the model
\begin{align*}
  \pa_t u = \diver\big(u^{m-1}\na u - u(1-u)\na c\big), \quad
  \pa_t c = \Delta c - c + u\quad\mbox{in }\Omega,\ t>0,
\end{align*}
was established in \cite{PeZh24} for different parameter regimes of $m$ and with small masses. 

Over the last decades, there has been significant progress in the development and analysis of multispecies chemotaxis models. The parabolic--elliptic model for multiple bacteria populations was analyzed in \cite{VPA23}. Extensions to parabolic--parabolic systems and both multiple cell densities and chemical concentrations were suggested in \cite{Lin24,Wol02}. The tendency of a population $i$ towards a population $k$ is quantified in \cite{Wol02} by the parameter $\eta_{ik}:=\sum_{j=1}^g\chi_{ij}b_{jk}$. A priori estimates from a Lyapunov functional are obtained in the conflict-free case, i.e., $\eta_{ik}\eta_{ki}>0$ for all $i,k$. Based on these estimates, the existence of a global weak solution to the conflict-free system under a sub-critical mass condition was proved in \cite{Lin24}. In the work \cite{KRZ18}, population species indexed by a continuous parameter and a single chemical signal was suggested. The qualitative behavior (blow-up, nontrivial stationary solutions, asymptotic behavior) was investigated for two-signal models in \cite{TaWa13} and for two-density models in \cite{NeTe14}, for instance. 

Our results do not rely on an explicit Lyapunov functional like in \cite{Lin24,Wol02} but on the entropy functional $\bar h$, for which uniform bounds can be derived. The volume-filling constraint, which guarantees bounded solutions $u_i$, allows us, in contrast to \cite{Lin24}, to dispense with any restrictions on the model coefficients. For our analysis, we extend the boundedness-by-entropy method of \cite{Jue15} to solution codomains that are bounded in some directions only.


\subsection{Main results}

We impose the following hypotheses:
\begin{itemize}
\item[(H1)] Domain: $\Omega\subset\R^d$ ($d\ge 1$) is a bounded domain, $T>0$; set $\Omega_T:=\Omega\times(0,T)$.
\item[(H2)] Initial data: $u^0\in L^\infty(\Omega;\R^n)$, $c^0\in H^1(\Omega;\R^g)$ satisfy $0\le c^0\le 1$ in $\Omega$ and $u^0(x)\in\overline{\mathcal D}$ for a.e.\ $x\in\Omega$, where $\mathcal D=\{u\in(0,1)^{n}:\sum_{i=1}^n u_i<1\}$. 
\item[(H3)] Parameters: $m_i\ge 1$; $\alpha_0\ge 0$, $\alpha_i>0$ for $i=1,\ldots,n$; $D_j$, $\lambda_j>0$; $b_{ji}\ge 0$, $\chi_{ij}\in\R$ for $i=0,\ldots,n$, $j=1,\ldots,g$. 
\end{itemize}

First, we show the existence of a global weak solution to \eqref{1.u}--\eqref{1.bc}. 

\begin{theorem}[Existence of solutions]\label{thm.ex}
Let Hypotheses (H1)--(H3) hold and let $\kappa\in\mathbb{N}$, $\kappa>d/2$. Then there exists a weak solution $q=(u,c)$ to \eqref{1.u}--\eqref{1.bc} with $u_0=1-\sum_{i=1}^n u_i$, satisfying $u(x,t)\in\overline{\mathcal D}$, $c(x,t)\in[0,\infty)^g$  for a.e.\ $(x,t)\in\Omega_T$,
\begin{align*}
  & u_i\in L^\infty(\Omega_T), \quad
  \alpha_iu_i^{m_i/2}, c_j\in L^2(0,T;H^1(\Omega)), \\
  &\pa_t u_i,\, \pa_t c_j\in L^2(0,T;H^\kappa(\Omega)')\quad\mbox{for }
  i=0,\ldots,n,\ j=1,\ldots,g.
\end{align*}
\end{theorem}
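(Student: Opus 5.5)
We follow the boundedness-by-entropy method of \cite{Jue15}, adapted to a codomain $\overline{\mathcal D}\times[0,\infty)^g$ that is bounded only in the $u$-directions. In the reduced variable $q=(u,c)\in\mathcal D\times\R^g$ we write $h(q)=\bar h(1-\sum_{i=1}^n u_i,u_1,\ldots,u_n,c)$ with $\bar h$ as in \eqref{1.hbar}. We then introduce entropy variables $w=h'(q)$, that is,
\begin{align*}
  w_i = \log\frac{u_i}{u_0} - \sum_{j=1}^g\frac{\chi_{ij}-\chi_{0j}}{D_j}c_j
  \quad (i=1,\ldots,n), \qquad
  w_{n+j} = Kc_j - \sum_{i=0}^n\frac{\chi_{ij}}{D_j}u_i .
\end{align*}
For $K$ large, $h$ is strictly convex on $\mathcal D\times\R^g$. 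We claim $h'$ is a bijection onto $\R^{n+g}$. Given $w$, fix $c$: the first $n$ equations determine $u=u(w,c)\in\mathcal D$ uniquely through the Gibbs formula $u_i/u_0=\exp(\cdot)$, and $u$ depends smoothly on $c$. The remaining equations have the form $Kc_j=w_{n+j}+(\text{bounded in }u)$, which is a contraction in $c$ for $K$ large, so a solution exists. Hence $q=(h')^{-1}(w)$ gives $u\in\mathcal D$ automatically, and this yields the $L^\infty$ bound on $u$. Nonnegativity of $c$ is not encoded in $w$. We will obtain it afterwards from the linear equation \eqref{1.c}, using a Stampacchia test with $c_j^-$, $b_{jk}\ge0$ and $u_k\ge0$.

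The approximation scheme is the standard one. We use an implicit Euler discretization with step $\tau$ and add the regularization $\eps\big(\sum_{|\beta|=\kappa}\int D^\beta w\cdot D^\beta\phi\,dx+\int w\cdot\phi\,dx\big)$ in $H^\kappa(\Omega)\hookrightarrow L^\infty$, which is why $\kappa>d/2$. We solve the resulting elliptic problem for $w^k$ with the Leray--Schauder theorem, linearizing the coefficient $q(w)$. The required a priori bound comes from testing with $w^k$. Convexity gives $h(q^k)-h(q^{k-1})\le(q^k-q^{k-1})\cdot w^k$. The diffusion part yields the entropy production \eqref{1.ep}, written in $q$-variables. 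The reaction term on the right of \eqref{1.ei} is bounded by $C(1+\int|c|^2)$, since the $u_i$ are bounded. This gives the discrete entropy inequality
\begin{align*}
  \int h(q^k) + \tau C_0\int\Big(\sum_i|\na u_i^{m_i/2}|^2+|\na c|^2\Big)
  + \eps\tau\|w^k\|_{H^\kappa}^2 \le C\tau\Big(1+\int|c^k|^2\Big) + \int h(q^{k-1}).
\end{align*}
The $c^2$-growth of $h$ absorbs the right-hand side, and a discrete Gronwall argument then gives bounds uniform in $(\tau,\eps)$.

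The main obstacle is the pointwise matrix inequality behind \eqref{1.ep}. We must show $z^Th''(q)A(q)z\ge C_0\big(\sum_i\alpha_iu_i^{m_i-2}z_i^2+\sum_j z_{n+j}^2\big)$ in the reduced variables, uniformly on $\mathcal D\times\R^g$, despite the fact that $\bar h''\bar A$ is only semidefinite. The plan has two steps. First, we show that in the $u$-block the volume-filling projection $J_i-u_i\sum_kJ_k$ combined with the $\log$-part of $\bar h$ yields the form $\sum_{i=0}^n\alpha_iu_i^{m_i-2}|\na u_i|^2-\big|\sum_{i}\alpha_iu_i^{m_i-1}\na u_i\big|^2$. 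Second, we handle this form and the cross terms. The mixed entropy term $-\chi_{ij}u_ic_j/D_j$ is chosen precisely so that the chemotactic drift terms in $J_i$ cancel against the cross-diffusion generated through $\na c$. The remaining mixed terms are bounded by Young's inequality and absorbed by $K|\na c|^2$ for large $K$, using that the $u_i$ are bounded.

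Nondegeneracy is the delicate point. Because $\sum u_i=1$, the $u$-block is a weighted variance of the vector $(\alpha_iu_i^{m_i-2}\na u_i)_i$ with weights $u_i$. Its degenerate direction corresponds to $\na u_0=-\sum\na u_i$, and this is what forces the use of the reduced variables. I would first attempt a Cauchy--Schwarz argument in the spirit of \cite{JuSt13}. If it fails when $\alpha_0=0$, I would fall back on keeping only $\sum_{i\ge1}\alpha_iu_i^{m_i-2}|\na u_i|^2$. This retained bound controls $\na u_0$ as well, through the constraint.

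For compactness, the entropy inequality gives $\na u_i^{m_i/2}$ and $\na c$ bounded in $L^2$. The equations give discrete time derivatives bounded in $L^2(0,T;H^\kappa(\Omega)')$. Aubin--Lions (discrete version) then yields strong convergence of $u_i^{m_i/2}$, and hence of $u_i$, since $u\mapsto u^{2/m_i}$ is continuous and the $u_i$ are bounded. It also yields strong convergence of $c$. The fluxes can be written as $\frac{2\alpha_i}{m_i}u_i^{m_i/2}\na u_i^{m_i/2}$ plus terms of the form $u_i\na c_k$ and products with the $u_k$. These converge weakly, since each is a product of a strongly converging bounded factor and a weakly converging gradient. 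The term $\eps w$ vanishes because $\sqrt{\eps\tau}\,\|w\|$ is bounded. Passing to the limit $(\tau,\eps)\to0$ gives a weak solution with $u\in\overline{\mathcal D}$. Finally, the Stampacchia argument for \eqref{1.c} gives $c\ge0$.
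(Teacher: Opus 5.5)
Your overall architecture matches the paper's. You use reduced variables $q=(u,c)$, bijectivity of $h'$ via a contraction in $c$ for large $K$, the $H^\kappa$-regularized implicit Euler scheme, a discrete entropy inequality, Aubin--Lions, and a minimum principle for $c$. Your reaction bound $C(1+\int|c|^2)$ with a discrete Gronwall argument also works, although in fact $r(q)\cdot h'(q)\le C_r$ pointwise because $-K\lambda_jc_j^2$ dominates.

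\textbf{The gap} is in the step you yourself call the main obstacle. Your formula for the $u$-block of the entropy production,
$\sum_{i=0}^n\alpha_iu_i^{m_i-2}|\na u_i|^2-|\sum_i\alpha_iu_i^{m_i-1}\na u_i|^2$, is incorrect. Starting from it, no argument can give coercivity. For $n=1$, $m_0=m_1=1$, $\alpha_0=0$ it equals $\alpha_1(1/u_1-\alpha_1)|\na u_1|^2$, which is negative whenever $\alpha_1>1$ and $u_1>1/\alpha_1$. So your Cauchy--Schwarz attempt cannot succeed for large $\alpha_i$. ``Keeping only'' $\sum_{i\ge1}\alpha_iu_i^{m_i-2}|\na u_i|^2$ would mean discarding a negative term. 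The weighted-variance picture and the ``degenerate direction'' are artifacts of this miscalculation. Your Young-inequality treatment of the cross term $\sum_{i,j}\frac{\chi_{ij}}{D_j}\na c_j\cdot\big(\alpha_iu_i^{m_i-1}\na u_i-u_i\sum_k\alpha_ku_k^{m_k-1}\na u_k\big)$ must put part of it onto the $u$-block. Hence the whole estimate \eqref{1.ep} is left unproved.

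\textbf{The missing observation} is that the constraint annihilates the rank-one part exactly. Write $\bar z=(\xi,\eta)$ with $\xi_0=-\sum_{i=1}^n\xi_i$, i.e.\ $\sum_{i=0}^n\xi_i=0$; this is the pointwise analogue of $\sum_i\na u_i=0$. The matrix $\mathrm{diag}(1/u_0,\ldots,1/u_n)\bar Q^{11}$ has entries $\alpha_iu_i^{m_i-2}\delta_{ij}-\alpha_ju_j^{m_j-1}$, hence
\begin{align*}
  \xi^T\mathrm{diag}(1/u_0,\ldots,1/u_n)\bar Q^{11}\xi
  = \sum_{i=0}^n\alpha_iu_i^{m_i-2}\xi_i^2
  - \Big(\sum_{i=0}^n\xi_i\Big)\Big(\sum_{j=0}^n\alpha_ju_j^{m_j-1}\xi_j\Big)
  = \sum_{i=0}^n\alpha_iu_i^{m_i-2}\xi_i^2.
\end{align*}
There is no negative term and no difficulty when $\alpha_0=0$.

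The same factor $\sum_i\xi_i$ removes the $-u_i\sum_ku_k\chi_{kj}$ part of $\bar Q^{12}$. This is what makes your chemotactic cancellation exact. The remaining cross term is then absorbed by Young's inequality, using $\alpha_iu_i^{2(m_i-1)}\le\alpha_iu_i^{m_i-2}$ (as $u_i\le 1$, $m_i\ge1$) together with $K$ large. The reduced-variable bound follows from $h''(q)=G^T\bar h''(\bar q)G$ and $GA(q)z=\bar A(\bar q)\bar z$, as in Lemmas \ref{lem.hA1} and \ref{lem.hA2}.

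A smaller imprecision: you cannot apply the standard discrete Aubin--Lions lemma to $u_i^{m_i/2}$, since your time-difference bound is for $u_i$. For $m_i>2$ you need the nonlinear version of the lemma used in the paper, or an additional fractional-regularity argument for $u_i$.
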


The proof of this theorem relies on expressions \eqref{1.ei} and \eqref{1.ep}, leading to the entropy inequality
\begin{align*}
  \frac{d}{dt}\int_\Omega\bar h(\bar q)dx
  + C_0\int_\Omega\bigg(\sum_{i=0}^n\frac{\alpha_i}{2}
  |\na u_i^{m_i/2}|^2 + \sum_{j=1}^g|\na c_j|^2\bigg)dx
  \le C_1 + C_1\int_\Omega\bar h(\bar q)dx
\end{align*}
for some constants $C_0$, $C_1>0$. We infer that the entropy $\int_\Omega \bar h(\bar q)dx$ is bounded on finite time intervals, and the entropy dissipation gives gradient bounds for $u_i^{m_i/2}$ and $c_j$. The boundedness-by-entropy method of \cite{Jue15} cannot be applied directly, since the solution codomain $\mathcal D\times [0,\infty)^g$ is not bounded, but the technique of the proof (fixed-point and compactness arguments) can still be used. A technical difficulty is the proof of the entropy production inequality \eqref{1.ep}; see Section \ref{sec.ent}. 

Our second result is the weak--strong uniqueness of solutions. The aim is to show that, whenever a strong solution exists, it is unique within the class of weak solutions sharing the same initial data. Currently, a proof of the uniqueness of weak solutions is out of reach for general cross-diffusion systems because of the strong gradient coupling and low regularity. For the weak--strong uniqueness result, we consider a simpler entropy density than \eqref{1.hbar} to streamline the computations:
\begin{align}\label{1.h2}
  h_2(\bar q) = \sum_{i=0}^n u_i(\log u_i-1) 
  + \frac{K}{2}\sum_{j=1}^g c_j^2, \quad
  \bar q=(\bar u,c)\in(0,1)^{n+1}\times\R^g,
\end{align}
where $K>0$ is some number. The result reads as follows.

\begin{theorem}[Weak--strong uniqueness]\label{thm.wsu}
Let $1\le m_i\le 2$ for $i=0,\ldots,n$. Let $q=(u,c)$ be a weak solution to \eqref{1.u}--\eqref{1.bc} satisfying the entropy inequality \eqref{4.ei}, and let $\widehat q=(\widehat u,\widehat c)$ be another weak solution to \eqref{1.u}--\eqref{1.bc} with the same initial data, satisfying the entropy equality
\begin{align}\label{1.ee}
  \int_\Omega h_2(\widehat{\bar q}(t))dx
  &+ \int_0^t\int_\Omega(\na\widehat{\bar q})^T:h_2''(\widehat{\bar q})
  \bar A(\widehat{\bar q})\na\widehat{\bar q} dxds \\
  &= \int_\Omega h_2(\bar q(0))dx
  + K\int_0^t\int_\Omega\sum_{j=1}^g \widehat c_j\bigg(\sum_{k=0}^n
  b_{jk}\widehat u_k - \lambda_j \widehat c_j\bigg)dxds, \nonumber 
\end{align}
fulfilling the additional regularity $\na\log\widehat u_i\in L^\infty(0,T;L^\infty(\Omega))$ and the strict positivity $\widehat u_i\ge\mu>0$ in $\Omega_T$ for some $\mu>0$, for $i=0,\ldots,n$ (called ``strong'' solution). We also assume that $\pa\Omega\in C^{2}$ and $\widehat c^0 \in W^{2,q}(\Omega)$ for $q>d$. Then $q_i(t)=\widehat q_i(t)$ in $\Omega$ for $t>0$, $i=1,\dots,n+g$.
\end{theorem}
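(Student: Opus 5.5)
We use the relative entropy method with the simplified density $h_2$ from \eqref{1.h2}. Define
\begin{align*}
  H(\bar q|\widehat{\bar q}) = \int_\Omega\Big(h_2(\bar q)-h_2(\widehat{\bar q})
  - h_2'(\widehat{\bar q})\cdot(\bar q-\widehat{\bar q})\Big)dx
  = \int_\Omega\bigg(\sum_{i=0}^n u_i\log\frac{u_i}{\widehat u_i}
  + \frac K2\sum_{j=1}^g|c_j-\widehat c_j|^2\bigg)dx.
\end{align*}
The linear terms $u_i-\widehat u_i$ drop out because $\sum_i u_i=\sum_i\widehat u_i=1$.

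\textbf{Step 1: the relative entropy inequality.} We combine three ingredients: the entropy inequality \eqref{4.ei} for $q$, the entropy equality \eqref{1.ee} for $\widehat q$, and the weak formulation of $q$ and of $\widehat q$ tested with $h_2'(\widehat{\bar q})=(\log\widehat u_i,K\widehat c_j)$. These test functions are admissible because $\widehat u_i\ge\mu$, $\na\log\widehat u_i\in L^\infty$, and $\widehat c$ is regular. The regularity of $\widehat c$ comes from maximal $L^p$-regularity for the linear heat equation \eqref{1.c} with bounded source $\sum_k b_{jk}\widehat u_k$; this is where $\pa\Omega\in C^2$ and $\widehat c^0\in W^{2,q}$, $q>d$, are used to get $\na\widehat c\in L^\infty(\Omega_T)$. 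The outcome is
\begin{align*}
  H(\bar q|\widehat{\bar q})(t) + \int_0^t\!\!\int_\Omega\Big[(\na\bar q)^T:h_2''(\bar q)\bar A(\bar q)\na\bar q
  - \text{(mixed terms with }\na\log\widehat u_i,\ \na\widehat c_j)\Big]dxds
  \le \int_0^t R\,ds .
\end{align*}
Here $R$ collects the reaction terms. Since the $u_i$ are bounded, $R\le C\|c-\widehat c\|_{L^2}^2+C\|u-\widehat u\|_{L^2}^2$, and $\|u-\widehat u\|_{L^2}^2\le C H$ by the Csiszár--Kullback-type inequality $\sum_i u_i\log(u_i/\widehat u_i)\ge\frac12(\sum_i|u_i-\widehat u_i|)^2$.

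\textbf{Step 2: rewriting the dissipation.} Using $\sum_i u_i=1$, the flux of $u_i$ is $u_i(F_i-\sum_k u_kF_k)$ with $F_i=\alpha_iu_i^{m_i-2}\na u_i+\sum_k\chi_{ik}\na c_k$. The $u$-part of the dissipation minus its cross terms with the strong solution can therefore be written as a weighted variance
\[
  \sum_i u_i\Big|(F_i-\widehat F_i)-\sum_k u_k(F_k-\widehat F_k)\Big|^2 ,
\]
up to remainders containing $(u_i-\widehat u_i)\,(\widehat F_i-\sum_k\widehat u_k\widehat F_k)$, where $\widehat F_i$ is built from $\widehat q$. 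The $c$-part gives $KD_j|\na(c_j-\widehat c_j)|^2$. The coupling terms $\chi_{ik}\na c_k$ are absorbed by Young's inequality into this $K$-dissipation, choosing $K$ large. The remainders are bounded using $\widehat F_i\in L^\infty$, which follows from $\na\log\widehat u_i,\na\widehat c\in L^\infty$ and $\widehat u\ge\mu$. They are estimated by $\eps\cdot$dissipation $+\,C_\eps(H+\|u-\widehat u\|_{L^2}^2)$.

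\textbf{Step 3: the main obstacle.} The hardest step is the porous-medium part: comparing $\alpha_iu_i^{m_i-1}\na\log u_i$ with $\alpha_i\widehat u_i^{m_i-1}\na\log\widehat u_i$ and controlling the difference $u_i^{m_i-1}-\widehat u_i^{m_i-1}$ multiplied by the bounded $\na\log\widehat u_i$. For $1\le m_i\le2$ the map $s\mapsto s^{m_i-1}$ is concave and Hölder. We would use
\[
  |u_i^{m_i-1}-\widehat u_i^{m_i-1}|\le C_\mu|u_i-\widehat u_i|,
\]
valid because $\widehat u_i\ge\mu$; the cases $u_i$ near $0$ and $u_i$ near $\widehat u_i$ are treated separately, and for $m_i\le2$ the case $u_i\to0$ remains bounded. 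Combined with the gradient term $\sqrt{u_i}\,u_i^{m_i-2}\na u_i$ from the weighted variance, Young's inequality then yields a bound by $\eps\cdot$dissipation $+\,C\|u-\widehat u\|_{L^2}^2$. For $m_i>2$ this comparison would require control of $u_i^{m_i-3/2}$-weighted gradients that the dissipation does not provide, which is why the hypothesis $m_i\le2$ enters.

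\textbf{Conclusion.} Collecting the estimates gives $H(t)\le C\int_0^tH\,ds$ with $H(0)=0$. Gronwall's lemma yields $H\equiv0$, hence $u=\widehat u$ and $c=\widehat c$.
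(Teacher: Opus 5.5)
Your Step 1 and the Gronwall conclusion agree with the paper. Step 2, however, rests on a false identity, and it is exactly where the paper's main work lies.

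With $h_2$ the entropy variables are $\log u_i$. So the flux difference is paired with $z_i:=\na\log(u_i/\widehat u_i)$, not with $\delta F_i:=F_i-\widehat F_i$. Carrying out the computation, with $\rho:=\sum_k u_kz_k$, gives
\begin{align*}
  \frac{d}{dt}H &\le -\int_\Omega\sum_{i=0}^n u_i(z_i-\rho)\cdot\delta F_i\,dx
  + \int_\Omega\rho\cdot\sum_{k=0}^n(u_k-\widehat u_k)\widehat F_k\,dx \\
  &\phantom{xx}- K\sum_{j=1}^g D_j\int_\Omega|\na(c_j-\widehat c_j)|^2dx
  + K\int_\Omega(r(\bar q)-r(\widehat{\bar q}))\cdot(c-\widehat c)\,dx.
\end{align*}
The first term is a $u$-weighted \emph{covariance} of $z$ and $\delta F$. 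It is a weighted variance of $\delta F$ only if $m_i=1$, all $\alpha_i$ are equal, and $\chi=0$.

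A concrete counterexample: take $n=1$, $\chi=0$, $\alpha_0=\alpha_1=1$, $m_0=m_1=m$, and constant $\widehat q$ (so $\widehat F=0$ and your remainders vanish). The dissipation is $(u_0^{m-2}+u_1^{m-2})|\na u_1|^2$, whereas your variance equals $u_0u_1(u_0^{m-2}+u_1^{m-2})^2|\na u_1|^2$. For $m=2$ these differ by the factor $2u_0u_1$. This is why the paper needs the projection $\Pi$: $\bar B(\bar q)$ is coercive only on $L$.

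To repair the step, insert
$$\delta F_i=\alpha_iu_i^{m_i-1}z_i+\alpha_i(u_i^{m_i-1}-\widehat u_i^{m_i-1})\na\log\widehat u_i+\sum_k\chi_{ik}\na(c_k-\widehat c_k).$$
The first piece yields the coercive term $\sum_i\alpha_iu_i^{m_i}|z_i-\rho|^2$, which is what the paper extracts with $\Pi$, plus the term $\rho\cdot\sum_i\alpha_iu_i^{m_i}(z_i-\rho)$. This dissipation controls only deviations from the mean. The mean $\rho$ appears here and is also a factor of your remainder, so it must be controlled separately. The tool is the volume-filling identity $\sum_k u_kz_k=\sum_k(\widehat u_k-u_k)\na\log\widehat u_k$, which follows from $\sum_k\na u_k=0$. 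You never use it, and without it your bound of the remainders by $\eps\cdot$dissipation $+\,C_\eps(H+\|u-\widehat u\|_{L^2}^2)$ is unjustified.

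This also shows that Step 3 misplaces the hypothesis $m_i\le2$. The bound $|u_i^{m_i-1}-\widehat u_i^{m_i-1}|\le C_\mu|u_i-\widehat u_i|$ holds for every $m_i\ge1$ once $\widehat u_i\ge\mu$. For $u_i<\mu/2$ just use $|u_i-\widehat u_i|\ge\mu/2$; otherwise use the mean value theorem. So this is not where $m_i\le2$ is needed.

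The restriction enters when two mixed terms are absorbed into $\alpha_iu_i^{m_i}|z_i-\rho|^2$:
\begin{itemize}
\item the porous-medium term $\alpha_iu_i(u_i^{m_i-1}-\widehat u_i^{m_i-1})\na\log\widehat u_i\cdot(z_i-\rho)$;
\item the chemotactic term $u_i(z_i-\rho)\cdot\sum_k(\chi_{ik}-\chi_{0k})\na(c_k-\widehat c_k)$, where subtracting $\chi_{0k}$ is allowed since $\sum_iu_i(z_i-\rho)=0$.
\end{itemize}
In both cases Young's inequality leaves the weight $u_i^{2-m_i}$, which is bounded only for $m_i\le2$.

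Your stated reason is also self-contradictory. The $u_i^{m_i-3/2}$-weighted gradient you say is missing for $m_i>2$ is exactly $\sqrt{u_i}\,u_i^{m_i-2}\na u_i$, which your claimed variance would supply.
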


The proof of Theorem \ref{thm.wsu} is based on an estimation of the relative entropy density, comparing the weak solution $\bar{q}$ and the ``strong'' solution $\widehat{\bar q}$:
\begin{align}\label{1.rel}
  h(\bar q|\widehat{\bar q}) = \sum_{i=0}^n\bigg(u_i\log\frac{u_i}{\widehat u_i} 
  - u_i + \widehat u_i\bigg) 
  + \frac{K}{2}\sum_{j=1}^g(c_j-\widehat c_j)^2,
\end{align}
where $\bar q=(\bar u,c)$, $\widehat{\bar q}=(\widehat{\bar u},\widehat c)$ and $\bar u=(u_0,\ldots,u_n)$, $\widehat{\bar u}=(\widehat u_0,\ldots,\widehat u_n)$.
The aim is the proof of
\begin{align}\label{1.relei}
  \frac{d}{dt}\int_\Omega h(\bar q|\widehat{\bar q})dx
  \le C(K)\int_\Omega|\bar{u}-\widehat{\bar u}|^2dx
  \le 2C(K)\int_\Omega h(\bar q|\widehat{\bar q})dx.
\end{align}
We conclude from Gronwall's lemma that $\int_\Omega h(\bar q|\widehat{\bar q}) dx=0$. The strict positivity of $\widehat u_i$ implies that $h(\bar q|\widehat{\bar q})\ge \frac12|u-\widehat{\bar u}|^2$, and hence $\bar u(t)=\widehat{\bar u}(t)$ in $\Omega$ for $t>0$. A first difficulty is the proof of \eqref{1.relei}. Since the mobility matrix $B(\bar q)=\bar A(\bar q)h_2''(\bar q)^{-1}$ is only positive definite on the subspace $Z=\{\bar z\in\R^{n+1}:\sum_{k=0}^n z_k=0\}\times\R^g$, we need to work with a certain projection operator $\Pi_1$, making the proof quite technical. 

A second difficulty is the degeneracy. The weak--strong uniqueness argument only works when the degeneracy is not too ``large'' \cite{HeJu26}, which leads to the restriction $m_i\le 2$. The reason is that we need to absorb the expression $\int_\Omega u_i^{2}|\Pi_1(\na\log(u/\widehat u))_i|^2 dx$, coming from some mixed terms, by the entropy production $\int_\Omega u_i^{m_i}|\Pi_1(\na\log(u/\widehat u))_i|^2 dx$, which is only possible if $m_i\le 2$ (since then $u_i^2\le u_i^{m_i}$). 

Next, we prove that the weak solution $(u,c)$ converges for $t\to\infty$ to the constant steady state
\begin{align*}
  u_i^\infty = \frac{1}{|\Omega|}\int_\Omega u_i^0 dx, \quad
  c_j^\infty = \frac{1}{\lambda_j}\sum_{k=0}^n b_{jk}u_k^\infty \quad
  \mbox{for }i=0,\ldots,n,\ j=1,\ldots,g,
\end{align*}
if the diffusion coefficients $\alpha_i$ or $D_j$ are sufficiently large or the coefficients $b_{ij}$ or $|\chi_{ij}|$ are sufficiently small in the sense specified in the following theorem. Let $b'=(b_{ji}-b_{j0})_{ij}$, $\chi'=(\chi_{ij}-\chi_{0j})_{ij}$ and set
\begin{align}\label{1.paradef}
  \alpha_* = \min_{i=1,\ldots,n}\alpha_i, \quad
  D_* = \min_{j=1,\ldots,g}D_j, \quad 
  \lambda_* = \min_{j=1,\ldots,g}\lambda_j.
\end{align}
Furthermore, we set $\bar q^\infty=(\bar u^\infty,c^\infty)$. We denote by $C_P>0$ the constant of the Poincar\'e inequality and by $\|\cdot\|_2$ the spectral norm.

\begin{theorem}[Long-time behavior]
Assume that $1\le m_i\le 2$ and 
\begin{align*}
  \frac{C_P^2\|b'\|_2^2\|\chi'\|_2^2}{4\alpha_*^2 D_*\lambda_*} < 1.
\end{align*}
Let $(u,c)$ be a weak solution to \eqref{1.u}--\eqref{1.bc} satisfying the entropy inequality \eqref{4.ei}. Then there exist constants $C>0$ and $\mu>0$ depending on the initial data and the model parameters such that
\begin{align*}
  \sum_{i=1}^n\|u_i(t)-u_i^\infty\|_{L^2(\Omega)}^2
  + \sum_{j=1}^g\|c_j(t)-c_j^\infty\|_{L^2(\Omega)}^2
  \le Ce^{-\mu t}, \quad t>0.
\end{align*}
\end{theorem}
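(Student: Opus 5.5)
We plan to use a relative entropy with respect to the constant steady state $\bar q^\infty$, in the spirit of the weak--strong uniqueness argument but with a much simpler reference state. Take
\begin{align*}
  H(t) = \int_\Omega\bigg(\sum_{i=0}^n\Big(u_i\log\frac{u_i}{u_i^\infty}-u_i+u_i^\infty\Big)
  + \frac{K}{2}\sum_{j=1}^g\frac{1}{\lambda_j}\big(c_j-c_j^\infty\big)^2\bigg)dx,
\end{align*}
with a weight $K>0$ to be fixed (possibly also weighted by $D_j$). Since $\int_\Omega u_i\,dx=|\Omega|u_i^\infty$ is conserved, the linear terms cancel. Because $\bar u^\infty$ is constant, the $u$-part is just the Boltzmann entropy up to a conserved linear functional. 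Its time derivative, computed from the weak formulation via the entropy inequality \eqref{4.ei} (or by testing with $\log u_i$ at the approximate level and passing to the limit), should give the diffusive dissipation $\sum_i\alpha_i\int u_i^{m_i-2}|\na u_i|^2$-type terms together with the chemotactic cross term. Here the volume-filling constraint allows us to replace $u_0=1-\sum_{i\ge1}u_i$, so only the differences $\chi_{ij}-\chi_{0j}=\chi'_{ij}$ appear, schematically $-\int\sum_{i\ge1}\sum_j\chi'_{ij}\na u_i\cdot\na c_j$ up to lower-order quadratic corrections. For the $c$-part, testing \eqref{1.c} with $c_j-c_j^\infty$ and using $\lambda_jc_j^\infty=\sum_kb_{jk}u_k^\infty$ gives
\begin{align*}
  -D_j\|\na c_j\|^2-\lambda_j\|c_j-c_j^\infty\|^2+\int(c_j-c_j^\infty)\sum_{k\ge1}b'_{jk}(u_k-u_k^\infty)dx,
\end{align*}
where again $u_0$ is eliminated, which explains why $b'$ enters.

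The key steps are as follows. (i) Since $m_i\le2$ and $u_i\le1$, we have $u_i^{m_i-2}\ge1$, so the dissipation controls $\alpha_*\sum_{i\ge1}\|\na u_i\|^2$ (for $i\ge1$; the $u_0$ equation is not needed after elimination). (ii) We bound the cross term $|\int\chi'\na u\cdot\na c|\le\|\chi'\|_2\|\na u\|\|\na c\|$ and the reaction coupling by $\|b'\|_2\|u-u^\infty\|\|c-c^\infty\|\le C_P\|b'\|_2\|\na u\|\|c-c^\infty\|$, using Poincaré with the zero mean of $u_i-u_i^\infty$. (iii) The resulting quadratic form in $(\|\na u\|,\|\na c\|,\|c-c^\infty\|)$, with diagonal entries $\alpha_*$, $KD_*$, $K\lambda_*$ and off-diagonal entries $\|\chi'\|_2/2$, $K C_P\|b'\|_2/2$, is positive definite for a suitable $K$ exactly under the smallness condition. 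The $\chi'$-term wants $K$ large, namely $K>\|\chi'\|_2^2/(4\alpha_*D_*)$ roughly, while the $b'$-term wants $K$ small, namely $K<4\alpha_*\lambda_*/(C_P^2\|b'\|_2^2)$. Optimizing over $K$ (splitting $\alpha_*$ between the two couplings) yields precisely the condition $C_P^2\|b'\|_2^2\|\chi'\|_2^2<4\alpha_*^2D_*\lambda_*$ up to how the constants are arranged. (iv) This gives $H'\le-\delta(\|\na u\|^2+\|c-c^\infty\|^2_{H^1})$. Since $u_i\in[0,1]$, the $u$-relative entropy is bounded above by $C\|u-u^\infty\|^2$ when $u^\infty_i>0$ (log-Sobolev/elementary inequality $s\log(s/a)-s+a\le(s-a)^2/a$), and Poincaré then gives $H'\le-\mu H$. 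Gronwall, combined with the Csiszár--Kullback-type lower bound $H\ge c\|u-u^\infty\|^2+\ldots$, yields the claim.

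We expect the main obstacle to be step (iii) together with making the computation rigorous for weak solutions. The cross term involves $\na\log u_i$ weighted by $u_i$ and the volume-filling projection, so we must check that, after eliminating $u_0$, the chemotactic cross term really reduces to something bounded by $\|\chi'\|_2\|\na u\|\|\na c\|$ rather than involving the full dissipation with degenerate weights. We would first try writing the $u$-dissipation in the reduced variables as in Section \ref{sec.ent}, bounding it below by $\alpha_*\sum_{i\ge1}|\na u_i|^2$ using $m_i\le2$. Rigor would come by deriving the inequality at the level of the approximate (time-discretized, regularized) scheme used for Theorem \ref{thm.ex}, or via \eqref{4.ei} stated with $\bar q^\infty$ as reference, and then passing to the limit using weak lower semicontinuity.
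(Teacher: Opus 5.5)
Your proposal is correct and is essentially the paper's argument. Both use the same relative entropy with respect to $\bar q^\infty$, the exact reduction of the cross terms to $\chi'$ and $b'$ via $u_0=1-\sum_{i\ge1}u_i$, the bound $u_i^{m_i-2}\ge1$, and Poincar\'e; your optimization over $K$ (the Schur-complement condition $\alpha_*>\|\chi'\|_2^2/(4KD_*)+KC_P^2\|b'\|_2^2/(4\lambda_*)$, minimized in $K$) gives exactly the stated condition, as do the paper's choices $\delta=\eps/(2C_P^2)$, $K=\|\chi'\|_2^2/(2\eps D_*)$ with $\eps<\alpha_*$.

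Two small points. Drop the $1/\lambda_j$ weight in $H$, since step (iii) already uses the unweighted $K$, which is what produces $D_*\lambda_*$. Also make sure the entropy identity holds on arbitrary intervals $[s,t]$: \eqref{4.ei} alone only compares time $t$ with time $0$ and does not give Gronwall decay, and the paper obtains the $[s,t]$ version by testing with $\log(u_i+\eps)$ and $Kc_j$ and letting $\eps\to0$.
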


The proof is again based on the relative entropy, replacing the second argument by the steady state. We prove in Section \ref{sec.time} that for suitable $K>0$ and $\eps>0$,
\begin{align*}
  \frac{d}{dt}\int_\Omega h(\bar q(t)|\bar q^\infty)dx 
  &\le -C_P^{-2}(\alpha_*-\eps)
  \int_\Omega\sum_{i=1}^n(u_i-u_i^\infty)^2 dx \\
  &\phantom{xx}- K\bigg(\lambda_*-\frac{C_P^2\|b'\|_2^2\|\chi'\|_2^2}{4\eps^2
  D_*} \bigg)\int_\Omega\sum_{j=1}^g(c_j-c_j^\infty)^2 dx.
\end{align*}
Choosing $\sqrt{C_P^2\|b'\|_2^2\|\chi'\|_2^2/(4 D_*\lambda_*)}<\eps<\alpha_*$, Gronwall's lemma concludes the result.

Our final result concerns the vanishing diffusion limit. We show that the solution $(u_\eps,c_\eps)$ to \eqref{1.u}--\eqref{1.bc} from Theorem \ref{thm.ex} with $\eps=D_j$ converges, up to a subsequence, as $\eps\to 0$ to a solution to the limiting system
\begin{equation}\label{1.lim}
\begin{aligned}
  \pa_t u_i &= \diver\bigg(J_i-u_i\sum_{k=0}^n J_k\bigg),
  \quad J_i = \alpha_i u_i^{m_i-1}\na u_i + u_i\sum_{k=1}^g
  \chi_{ik}\na c_k, \\
  \pa_t c_j &= -\lambda_jc_j + \sum_{k=0}^n b_{jk}u_k
  \quad\mbox{in }\Omega,\ t>0,\ i=0,\ldots,n,\ j=1,\ldots,g,
\end{aligned}
\end{equation}
together with the initial condition (3) and the no-flux boundary condition for the cell density equations in \eqref{1.bc}.

\begin{theorem}[Vanishing diffusion limit]\label{thm.lim}
Let $\pa\Omega\in C^{1,1}$, $1\le m_i\le 2$ for $i=0,\ldots,n$, and let $\bar q^\eps=(\bar u^\eps,c^\eps)$ be a weak solution to \eqref{1.u}--\eqref{1.bc} with $D_j=\eps>0$ for $j=1,\ldots,g$. Then there exists a subsequence that is not relabeled such that, as $\eps\to 0$, for $i=0,\ldots,n$, $j=1,\ldots,g$,
\begin{align*}
  u_i^\eps\to u_i,\quad c_j^\eps\to c_j, \quad
  \eps\Delta c_j^\eps\to 0 
  \quad\mbox{strongly in }L^2(\Omega_T),
\end{align*}
where $(u,c)$ solves \eqref{1.lim} with initial conditions \eqref{1.ic} and no-flux boundary conditions for the cell densities in \eqref{1.bc}. 
\end{theorem}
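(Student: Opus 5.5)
The plan is to derive $\eps$-uniform bounds from the entropy inequality, upgrade them with a maximal-regularity bound for $c^\eps$, and pass to the limit by compactness.

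\textbf{Step 1: $\eps$-uniform bounds.} The entropy inequality from Theorem~\ref{thm.ex} cannot be used as stated, because the entropy density \eqref{1.hbar} contains the factor $\chi_{ij}/D_j=\chi_{ij}/\eps$. Both the convexity threshold for $K$ and the constant $C_0$ in \eqref{1.ep} therefore degenerate as $\eps\to0$. I would instead use the simpler density $h_2$ from \eqref{1.h2}, or better, derive the bounds directly.

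\emph{Bound on $c^\eps$.} Since $0\le u_i^\eps\le1$, the equation for $c_j^\eps$ is a linear heat equation with bounded source. The comparison principle then gives $0\le c_j^\eps\le C$ uniformly in $\eps$.

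\emph{Gradient bound on $c^\eps$.} Testing \eqref{1.c} with $-\Delta c_j^\eps$ gives
\[
\frac12\frac{d}{dt}\|\na c_j^\eps\|_{L^2}^2+\eps\|\Delta c_j^\eps\|_{L^2}^2+\lambda_j\|\na c_j^\eps\|_{L^2}^2=-\sum_k b_{jk}\int_\Omega u_k^\eps\Delta c_j^\eps\,dx .
\]
The right-hand side is only controlled by $\eps^{-1}$, so the estimate must be arranged differently. I would differentiate the ODE-like equation: $w=\na c_j^\eps$ satisfies $\pa_t w=\eps\Delta w-\lambda_j w+\sum_k b_{jk}\na u_k^\eps$. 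This gives
\[
\|\na c^\eps\|_{L^\infty(0,T;L^2)}\le C\big(1+\|\na \bar u^\eps\|_{L^2(\Omega_T)}\big).
\]
The boundary terms are handled using $\pa\Omega\in C^{1,1}$ and the Neumann condition, via $\int_{\pa\Omega}\pa_\nu|\na c|^2\le C\int_{\pa\Omega}|\na c|^2$ and a trace interpolation.

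\emph{Bound on $\bar u^\eps$.} I would combine this with the Boltzmann part $\sum_i u_i\log u_i$ of the entropy. Its production yields $\sum_i\alpha_i\int u_i^{m_i-2}|\Pi\na u_i|^2$-type terms. The chemotactic cross terms are bounded by $\delta\int u_i^{m_i-2}|\na u_i|^2\,u_i^{2}+C\|\na c\|^2$, and here $m_i\le2$ enters, since $u_i^2\le u_i^{m_i}$ allows absorption. Gronwall's lemma then closes the coupled system for $\int\sum_i u_i\log u_i+\|\na c^\eps\|^2$ and yields:
\begin{itemize}
\item $\na(u_i^\eps)^{m_i/2}$ bounded in $L^2(\Omega_T)$;
\item $\na c^\eps$ bounded in $L^\infty(0,T;L^2)$;
\item $\eps\|\Delta c^\eps\|_{L^2(\Omega_T)}^2\le C$.
\end{itemize}
The last item gives $\eps\Delta c^\eps\to0$ strongly in $L^2$, at rate $\sqrt\eps$.

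\textbf{Step 2: compactness.} The time derivatives $\pa_t u_i^\eps$ are bounded in $L^2(0,T;H^\kappa(\Omega)')$, since the fluxes are bounded in $L^2$. By the Aubin--Lions lemma (nonlinear version, e.g.\ Dubinskii/Chen--J\"ungel--Liu), $u_i^\eps\to u_i$ strongly in $L^2(\Omega_T)$. For $c^\eps$, strong convergence does not follow from gradient bounds alone, because we also need $\na c^\eps\to\na c$ strongly, or at least weakly, to pass to the limit in $u_i\na c_k$.
\begin{itemize}
\item \emph{Strong convergence of $c^\eps$.} Since $\pa_t c^\eps$ is bounded in $L^2(\Omega_T)$ (the terms $\eps\Delta c^\eps$ go to $0$) and $\na c^\eps$ is bounded, Aubin--Lions gives $c^\eps\to c$ strongly in $L^2$.
\item \emph{Products.} Weak convergence of $\na c^\eps$ combined with strong convergence of $u^\eps$ suffices for $u_i^\eps\na c_k^\eps$. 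The term $u_i^{m_i-1}\na u_i$ equals $\frac{2}{m_i}u_i^{m_i/2}\na u_i^{m_i/2}$, which is a strong-times-weak product.
\end{itemize}

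\textbf{Step 3: identification of the limit.} Passing to the limit in the weak formulations yields \eqref{1.lim}. The no-flux condition survives only for the $u_i$ equations, while the $c$ equation becomes pointwise. The initial data are recovered through the time-derivative bounds.

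\textbf{Main obstacle.} The hard part is the $\eps$-uniform gradient estimate for $c^\eps$ coupled to the cross-diffusion system. This is where $m_i\le2$ and the $C^{1,1}$ boundary are used, and the degeneracy of the $u_i$-diffusion makes the absorption of $\int u_i^2|\na c|^2$-type terms delicate. If it fails as planned, I would instead estimate $\na c^\eps$ through $\na u^\eps$ in $L^2(\Omega_T)$ only where $m_i=1$, and use interpolation otherwise.
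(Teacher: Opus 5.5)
Your proposal is correct and follows essentially the paper's route. You run Gronwall on the Boltzmann entropy of $\bar u^\eps$ plus $\frac K2\sum_j\|\na c_j^\eps\|_{L^2}^2$ (the paper's $h_3$), using $m_i\le 2$ so that $|\na u_i^\eps|^2\le (u_i^\eps)^{m_i-2}|\na u_i^\eps|^2$ absorbs both cross terms, and then apply Aubin--Lions with strong--weak products, exactly as the paper does. Your detour through the differentiated $c$-equation with curvature and trace boundary terms is unnecessary, however, and your remark that testing with $-\Delta c_j^\eps$ yields only $\eps^{-1}$ control is mistaken. Integrating the source term by parts, $-\int_\Omega u_k^\eps\Delta c_j^\eps\,dx=\int_\Omega\na u_k^\eps\cdot\na c_j^\eps\,dx$ (Neumann condition), gives the gradient estimate directly, which is what the paper does, using $\pa\Omega\in C^{1,1}$ only for the $H^2$ regularity of $c^\eps$.
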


For the proof, we use the entropy density
\begin{align}\label{1.h3}
  h_3(\bar q) 
  = \sum_{i=0}^n u_i(\log u_i-1)
  + \frac{K}{2}\sum_{j=1}^g|\na c_j|^2 + n + 1, \quad
  \bar q = (\bar u,c),
\end{align}
and show that, for some constant $C>0$ independent of $\eps$,
\begin{align*}
  \frac{d}{dt}\int_\Omega h_3(\bar q^\eps)dx 
  + \int_\Omega\bigg(\sum_{i=0}^n
  \frac{\alpha_i}{2}|\na u_i^\eps|^2
  + \eps\sum_{j=1}^g(\Delta c_j^\eps)^2\bigg)dx
  \le C\int_\Omega h_3(\bar q^\eps)dx,
\end{align*}
which proves, after an application of Gronwall's lemma, some estimates for $(u^\eps,c^\eps)$ uniform in $\eps$. The convergence results then follow from the Aubin--Lions lemma.

The paper is organized as follows. The entropy structure, in particular the positive definiteness of the mobility matrix, is explored in Section \ref{sec.ent}. Theorems \ref{thm.ex}--\ref{thm.lim} are proved in Sections \ref{sec.ex}--\ref{sec.lim}. Finally, equations \eqref{1.u} are derived formally from a multiphase approach in Appendix \ref{sec.deriv}, giving a justification of our model. 


\section{Entropy structure}\label{sec.ent}

We formulate equations \eqref{1.u}--\eqref{1.c} as a system of equations for the vector-valued unknown $\bar q=(\bar u,c)$, where $\bar u=(u_0,\ldots,u_n)$ and $c=(c_1,\ldots,c_g)$:
\begin{align*}
  & \pa_t\bar q_i = \diver\bigg(\sum_{j=0}^{n+g}\bar A_{ij}(\bar q)
  \na\bar q_j\bigg) + \bar r_i(\bar q), \quad i=0,\ldots,n+g, \\
  & \mbox{where }\bar A(\bar q) = \begin{pmatrix}
  \bar Q^{11}(\bar q) & \bar Q^{12}(\bar q) \\
  \bar Q^{21}(\bar q) & \bar Q^{22}(\bar q)
  \end{pmatrix}, \nonumber 
\end{align*}
the submatrices $Q^{k\ell}(\bar q)$ are given by
\begin{equation}\label{3.barQ}
\begin{aligned}
  \bar Q^{11}_{ij}(\bar q) &= \alpha_iu_i^{m_i-1}\delta_{ij}
  -u_i\alpha_ju_j^{m_j-1} &&\mbox{for }i=0,\ldots,n,\ j=0,\ldots,n, \\
  \bar Q^{12}_{ij}(\bar q) &= u_i\chi_{ij}-u_i\sum_{k=0}^n u_k\chi_{kj}
  &&\mbox{for }i=0,\ldots,n,\ j=1,\ldots,g, \\
  \bar Q^{21}_{ij}(\bar q) &= 0 &&\mbox{for }i=1,\ldots,g,\  
  j=0,\ldots,n, \\
  \bar Q^{22}_{ij}(\bar q) &= D_i\delta_{ij} 
  &&\mbox{for }i=1,\ldots,g,\ j=1,\ldots,g,
\end{aligned}
\end{equation}
and the reaction term $\bar r(\bar q)=(r_i(\bar q))_{i=0}^{n+g}$ reads as
\begin{align}\label{3.r}
  r_i(\bar q) = 0\quad\mbox{for }i=0,\ldots,n, \quad
  r_{j+n}(\bar q) = \sum_{k=0}^n b_{jk}u_k - \lambda_jc_j
  \quad\mbox{for }j=1,\ldots,g.
\end{align}
We set $r(\bar q)=(r_{n+1},\ldots,r_{n+g})(\bar q)$, which gives $\bar r = (0,r)$.

Taking into account the volume-filling constraint, we can replace $u_0$ by $1-\sum_{i=1}^n u_i$ and introduce the reduced entropy density
\begin{align*}
  h(q) = \bar h(u_0,q), \quad\mbox{where }q=(u,c),\ 
  u=(u_1,\ldots,u_n),\ u_0 = 1-\sum_{i=1}^n u_i,
\end{align*}
defined for $q\in\mathcal D\times\R^g$, where $\bar h$ is introduced in \eqref{1.hbar} and $\mathcal D$ is defined in Hypothesis (H2). 

\begin{lemma}\label{lem.h}
The mapping $h':\mathcal D\times\R^g\to\R^{n+g}$ is a bijection for sufficiently large $K>0$.
\end{lemma}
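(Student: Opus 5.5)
Writing $h(q)=\bar h(u_0,q)$ with $u_0=1-\sum_{i=1}^n u_i$, I first compute $h'$ explicitly. For $i=1,\ldots,n$ and $j=1,\ldots,g$,
\begin{align*}
  \frac{\pa h}{\pa u_i} &= \log\frac{u_i}{u_0} - \sum_{j=1}^g\frac{\chi_{ij}-\chi_{0j}}{D_j}c_j =: w_i, \\
  \frac{\pa h}{\pa c_j} &= Kc_j - \sum_{k=0}^n\frac{\chi_{kj}}{D_j}u_k =: v_j,
\end{align*}
where $u_0=1-\sum_{i=1}^n u_i$ in the second line. To prove bijectivity I fix $(w,v)\in\R^{n+g}$ and solve for $(u,c)\in\mathcal D\times\R^g$. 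The first set of equations gives $u$ as a function of $c$: set $y_i=w_i+\sum_j(\chi_{ij}-\chi_{0j})c_j/D_j$. Then
\begin{align*}
  u_i = \frac{e^{y_i}}{1+\sum_{k=1}^n e^{y_k}}, \qquad u_0=\frac{1}{1+\sum_{k=1}^n e^{y_k}},
\end{align*}
which is the standard bijection $\mathcal D\to\R^n$, $u\mapsto(\log(u_i/u_0))_i$, from \cite{JuSt13,Jue15}. Hence $u=U(c)\in\mathcal D$ is uniquely determined by $c$ (and $w$), and $U$ is smooth and bounded with values in $\mathcal D$.

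It remains to solve $Kc_j=v_j+\sum_k\chi_{kj}U_k(c)/D_j$, that is, to find a fixed point $c=F(c)$ with $F_j(c)=K^{-1}(v_j+\sum_{k}\chi_{kj}U_k(c)/D_j)$. Since $0\le U_k\le1$, the map $F$ sends $\R^g$ into a bounded ball. This gives existence via Brouwer. For uniqueness I use a contraction argument, and this is where the largeness of $K$ is needed. The Jacobian $\pa U/\pa y$ is the matrix $\operatorname{diag}(u)-uu^T$, whose entries are bounded by $1$ uniformly. Also $\pa y/\pa c$ is the constant matrix $((\chi_{ij}-\chi_{0j})/D_j)$. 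Hence $\|DF\|\le C(\chi,D)/K$ uniformly in $c$, so for $K>C(\chi,D)$ the map $F$ is a global contraction. Banach's fixed point theorem then yields a unique $c$, and $u=U(c)$ follows. This gives surjectivity and injectivity simultaneously, since any preimage must satisfy exactly this fixed point system.

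The only delicate point is the uniform Lipschitz bound. The Jacobian of the softmax map $y\mapsto u$ is $\operatorname{diag}(u)-uu^T$, which has norm at most $1$ independently of $y$, and this gives a threshold for $K$ that is independent of $(w,v)$. Alternatively, one could argue that $h$ is strictly convex on the convex set $\mathcal D\times\R^g$ for large $K$ (so $h'$ is injective) and coercive, but the explicit fixed-point route is cleaner. I expect no serious obstacle.
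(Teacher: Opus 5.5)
Your proof is correct and follows the paper's argument. You solve the first $n$ equations explicitly for $u$ as the logistic (softmax) function of $w_i+\sum_j(\chi_{ij}-\chi_{0j})c_j/D_j$. You then obtain $c$ as the unique fixed point of a map $F$ with $\|DF\|\le C(\chi,D)/K$ by Banach's theorem, for $K$ large; keeping the $k=0$ term in $F$ is in fact slightly more accurate than the paper's displayed formula, which drops the harmless constant $\chi_{0j}/(KD_j)$, and your Brouwer remark is redundant since Banach already gives existence.
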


\begin{proof}
We first compute the partial derivatives of $h$:
\begin{align*}
  \frac{\pa h}{\pa u_i}(q) &= \log\frac{u_i}{u_0} - R_i(c)
  \quad\mbox{for }i=1,\ldots,n, \\
  \frac{\pa h}{\pa c_j}(q) &= 
  - \sum_{i=0}^n\frac{\chi_{ij}}{D_j}u_i + Kc_j 
  \quad\mbox{for }j=1,\ldots,g,
\end{align*}
where $R_i(c)=\sum_{j=1}^g(\chi_{ij}-\chi_{0j})c_j/D_j$. For given $w\in\R^{n+g}$, we claim that there exists $q=(u,c) \in \mathcal{D}\times\R^g$ such that $h'(q)=w$. This equation is equivalent to
\begin{align*}
  u_i = \frac{\exp(w_i+R_i(c))}{1 + \sum_{k=1}^n\exp(w_k+R_k(c))}, \quad
  c_j = \frac{w_{j+n}}{K} + \frac{1}{K}\sum_{i=1}^n
  \frac{\chi_{ij}-\chi_{0j}}{D_j}u_i,
\end{align*}
where $i=1,\ldots,n$, $j=1,\ldots,g$. We wish to find $(u,c)$ such that both equations hold simultaneously. We accomplish this by using a fixed-point argument. For this, we define $F:\R^g\to\R^g$ by
\begin{align*}
  F_j(c) = \frac{w_{j+n}}{K} + \frac{1}{K}\sum_{i=1}^n
  \frac{\chi_{ij}-\chi_{0j}}{D_j}
  \frac{\exp(w_i+R_i(c))}{1 + \sum_{k=1}^n\exp(w_k+R_k(c))},
  \quad j=1,\ldots,g.
\end{align*}
The derivative $F'$ can be estimated as $|F'(c)|\le C(\chi,D)/K$. This shows that $F$ is Lipschitz continuous:
\begin{align*}
  |F(c) - F(\widehat{c})|\le \frac{C(\chi,D)}{K}
  |c-\widehat{c}|\quad\mbox{for }c,\,\widehat{c}\in\R^g.
\end{align*}
Thus, choosing $K$ sufficiently large, the mapping $F$ is a contraction on $\R^g$ and admits a unique fixed point by Banach's fixed-point theorem. 
\end{proof}

Next, we show that the matrix $\bar h''(\bar q)\bar A(\bar q)$ is positive definite on a certain subset.

\begin{lemma}[Positive definiteness]\label{lem.hA1}
Let $K>0$ be sufficiently large. For $\bar q\in(0,1)^{n+1}\times\R^g$, the matrix $\bar h''(\bar q)\bar A(\bar q)$ is positive definite on the subspace $Z:=\{\bar z=(z_k)\in\R^{n+1+g}:\sum_{k=0}^{n}z_k=0\}$, i.e., there exists a constant $\gamma(K)>0$ such that for all $\bar q\in(0,1)^{n+1}\times\R^g$ and $\bar z\in Z$,
\begin{align*}
  \bar z^T\bar h''(\bar q)\bar A(\bar q)\bar z
  \ge \sum_{i=0}^n\frac{\alpha_i}{2} u_i^{m_i-2}z_i^2
  + \gamma(K)\sum_{j=1}^{g}z_{j+n}^2.
\end{align*}
It holds that $\gamma(K)\to \infty$ as $K\to\infty$. 
\end{lemma}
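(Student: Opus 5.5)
The plan is a direct computation: multiply $\bar h''(\bar q)$ and $\bar A(\bar q)$ blockwise, observe that the volume-filling structure makes the dangerous terms vanish on $Z$, and absorb the remaining cross terms with Young's inequality. Write $\bar z=(z,y)$ with $z=(z_0,\ldots,z_n)$, $\sum_{k=0}^n z_k=0$, and $y=(z_{n+1},\ldots,z_{n+g})$.

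\textbf{Step 1: the Hessian and the blocks $M^{11}$, $M^{12}$.} From \eqref{1.hbar}, the Hessian has blocks
\begin{itemize}
\item $\mathrm{diag}(1/u_i)$ in the $u$--$u$ part,
\item $-\chi_{ij}/D_j$ in the $u$--$c$ part (and its transpose in the $c$--$u$ part),
\item $K I$ in the $c$--$c$ part.
\end{itemize}
Using \eqref{3.barQ}, the product $M=\bar h''\bar A$ has upper blocks
\begin{align*}
  M^{11}_{ij} = \alpha_iu_i^{m_i-2}\delta_{ij} - \alpha_ju_j^{m_j-1}, \qquad
  M^{12}_{ij} = \Big(\chi_{ij}-\sum_{k=0}^n u_k\chi_{kj}\Big) - \chi_{ij}
  = -\sum_{k=0}^n u_k\chi_{kj}.
\end{align*}
Here the $-\chi_{ij}$ comes from $(-\chi/D)\,\bar Q^{22}$, and it exactly cancels the $\chi_{ij}$ in $\bar Q^{12}$. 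Both the second term of $M^{11}_{ij}$ and all of $M^{12}_{ij}$ are independent of the row index $i$. Hence, for $\sum_i z_i=0$,
\[
  z^TM^{11}z=\sum_{i=0}^n\alpha_iu_i^{m_i-2}z_i^2, \qquad z^TM^{12}y=0 .
\]
This is where the restriction to $Z$ enters.

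\textbf{Step 2: the lower blocks $M^{21}$, $M^{22}$.} One computes
\begin{align*}
  M^{21}_{jl} &= -\frac{\alpha_lu_l^{m_l-1}}{D_j}\Big(\chi_{lj}-\sum_{i=0}^n u_i\chi_{ij}\Big), \\
  M^{22}_{jk} &= KD_j\delta_{jk} - \sum_{i=0}^n\frac{\chi_{ij}}{D_j}u_i\Big(\chi_{ik}-\sum_{l=0}^n u_l\chi_{lk}\Big).
\end{align*}
Since $0<u_i<1$, the second part of $M^{22}$ is bounded by a constant $C(\chi,D)$. Therefore
\[
  y^TM^{22}y\ge (KD_*-C)|y|^2 .
\]

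\textbf{Step 3 (main step): absorbing the cross term $y^TM^{21}z$.} This is the only genuinely coupling term left. The bracket in $M^{21}$ is bounded, so Young's inequality gives
\[
  |y^TM^{21}z|\le \sum_{l=0}^n\alpha_lu_l^{m_l-1}|z_l|\,C|y|
  \le \sum_{l=0}^n\frac{\alpha_l}{2}u_l^{m_l-2}z_l^2 + C\sum_{l=0}^n\alpha_lu_l^{m_l}|y|^2 .
\]
Because $u_l\le1$ and $m_l\ge1$, we have $u_l^{m_l}\le 1$, so the last term is at most $C'|y|^2$.

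\textbf{Conclusion.} Combining Steps 1--3,
\[
  \bar z^TM\bar z\ge\sum_{i=0}^n\frac{\alpha_i}{2}u_i^{m_i-2}z_i^2+\gamma(K)|y|^2,
  \qquad \gamma(K)=KD_*-C-C',
\]
which is positive for $K$ large and tends to infinity as $K\to\infty$. The constants depend only on $\chi$, $D$ and $\alpha$, not on $\bar q$, so the bound is uniform over $(0,1)^{n+1}\times\R^g$. If $\alpha_0=0$, the $l=0$ cross term vanishes, so nothing changes.

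The expected obstacle is bookkeeping rather than substance. One must check the exact cancellation in $M^{12}$ and correctly order the indices in the product. Applying Young's inequality with the weight $u_l^{m_l-2}$ is what makes the cross term absorbable uniformly even as $u_l\to0$.
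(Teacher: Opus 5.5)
Your proof is correct and follows essentially the same route as the paper. Both compute $\bar h''(\bar q)\bar A(\bar q)$ blockwise, use $\sum_{k=0}^n z_k=0$ to reduce the $u$--$u$ block to $\sum_i\alpha_iu_i^{m_i-2}z_i^2$ and to kill the $u$--$c$ block (via the cancellation of $\chi_{ij}$ against $Y\bar Q^{22}$), bound the $c$--$c$ block below by $(KD_*-C)|y|^2$, and absorb the $c$--$u$ cross term by Young's inequality. The only cosmetic difference is that you weight Young's inequality by $u_l^{(m_l-2)/2}$ and then use $u_l^{m_l}\le 1$, whereas the paper applies it unweighted and then uses $u_i^{2(m_i-1)}\le u_i^{m_i-2}$; both give $\gamma(K)=KD_*-C\to\infty$.
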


\begin{proof}
We introduce the matrix $Y\in\R^{(n+1)\times g}$ by $Y_{ij}=-\chi_{ij}/D_j$ for $i=0,\ldots,n$, $j=1,\ldots,g$. Then
\begin{align*}
  \bar h''(\bar q) = \begin{pmatrix}
  \mbox{diag}(1/u_0,\ldots,1/u_n) & Y \\
  Y^T & \mbox{diag}(K,\ldots,K)
  \end{pmatrix}.
\end{align*}
Let $\bar z=(\xi,\eta)=(\xi_0,\ldots,\xi_n,\eta_1,\ldots,\eta_g)\in Z$. The constraint $\sum_{i=0}^n \xi_i=0$ implies that
\begin{align}\label{3.11}
  \xi^T\mbox{diag}\bigg(\frac{1}{u_0},\ldots,\frac{1}{u_n}\bigg)
  \bar Q^{11}\xi &= \sum_{i=0}^n\alpha_i u_i^{m_i-2}\xi_i^2, \\
  \xi^T\mbox{diag}\bigg(\frac{1}{u_0},\ldots,\frac{1}{u_n}\bigg)
  \bar Q^{12}\eta &= \sum_{i=0}^n\sum_{j=1}^g\chi_{ij}\xi_i\eta_j
  = -\xi^T Y\bar Q^{22}\eta. \label{3.12}
\end{align}
Taking into account $\bar Q^{21}=0$, this shows that
\begin{align}\label{3.aux}
  \bar z^T\bar h''(\bar q)\bar A(\bar q)\bar z
  &= \xi^T\mbox{diag}\bigg(\frac{1}{u_0},\ldots,\frac{1}{u_n}\bigg)
  \bar Q^{11}\xi 
  + \xi^T\bigg(\mbox{diag}\bigg(
  \frac{1}{u_0},\ldots,\frac{1}{u_n}\bigg)\bar Q^{12} 
  + Y\bar Q^{22}\bigg)\eta \\
  &\phantom{xx}+ \eta^TY^T\bar Q^{11}\xi 
  + \eta^T\big(Y^T\bar Q^{12} 
  + \mbox{diag}(K,\ldots,K)\bar Q^{22}\big)\eta.
  \nonumber  
\end{align}
The first term on the right-hand side simplifies thanks to \eqref{3.11}; the second term vanishes because of \eqref{3.12}; and we use for the remaining terms
\begin{align*}
  |\eta^TY^T\bar Q^{12}\eta| \le C_0\sum_{j=1}^g \eta_j^2,
  \quad\eta^T\mbox{diag}(K,\ldots,K)\bar Q^{22}\eta
  = K\sum_{j=1}^g D_j\eta_j^2,
\end{align*}
where $C_0>0$ depends only on $\chi_{ij}$ and $D_j$. We apply Young's inequality with $\delta>0$ to the third term in \eqref{3.aux}:
\begin{align*}
  |\eta^TY^T\bar Q^{11}\xi| &= \bigg|\sum_{i=0}^n\sum_{j=1}^g
  \eta_j\frac{\chi_{ij}}{D_j}\bigg(\alpha_iu_i^{m_i-1}\xi_i  - u_i\sum_{k=0}^n\alpha_ku_k^{m_k-1}\xi_k\bigg)\bigg| \\
  &\le \delta\sum_{i=0}^n \alpha_iu_i^{2(m_i-1)}\xi_i^2
  + C(\delta)\sum_{j=1}^g\eta_j^2
  \le \delta \sum_{i=0}^n \alpha_iu_i^{m_i-2}\xi_i^2
  + C(\delta)\sum_{j=1}^g\eta_j^2,
\end{align*}
where $C(\delta)>0$ also depends on $\chi_{ij}$, $D_j$, and $\alpha_i$. 
We conclude from \eqref{3.aux} that
\begin{align*}
  \bar z^T\bar h''(\bar q)\bar A(\bar q)\bar z
  \ge (1-\delta)\sum_{i=0}^n \alpha_i u_i^{m_i-2}\xi_i^2
  + \sum_{j=1}^g(D_jK - C(\delta))\eta_j^2.
\end{align*}
We choose $\delta=1/2$ and $\gamma(K)=D_*K-C(\delta)$ (see \eqref{1.paradef}) to finish the proof.
\end{proof}

While the full variable $\bar q$ simplifies the computation of the matrix $\bar h''(\bar q)\bar A(\bar q)$, the existence analysis is based on a system in which the unknown $u_0$ is replaced by $1-\sum_{i=1}^n u_i$, thanks to the volume-filling constraint. Accordingly, let $q=(u,c)\in\R^{n+g}$ with $u=(u_1,\ldots,u_n)$ and $c=(c_1,\ldots,c_g)$. We introduce the reduced diffusion matrix $A(q)\in\R^{(n+g)\times(n+g)}$ by 
\begin{align*}
  A_{ij}(q) = \begin{cases}
  \bar A_{ij}(u_0,q)-\bar A_{i0}(u_0,q) &\mbox{for }i,j=1,\ldots,n, \\
  \bar A_{ij}(u_0,q) &\mbox{else}
\end{cases}
\end{align*} 
for $i,j=1,\ldots,n+g$. We claim that $h''(q)A(q)$ is positive definite.

\begin{lemma}[Positive definiteness of the reduced matrix]\label{lem.hA2}
Let $K>0$ be sufficiently large. For $q\in\mathcal D\times\R^g$, the matrix $h''(q)A(q)$ is positive definite, i.e., with the constant $\gamma(K)>0$ from Lemma \ref{lem.hA1}, for all $q\in\mathcal D\times\R^g$ and $z\in\R^{n+g}$,
\begin{align*}
  z^T h''(q)A(q)z
  \ge \sum_{i=0}^n\frac{\alpha_i}{2} u_i^{m_i-2}z_i^2
  + \gamma(K)\sum_{j=1}^{g}z_{j+n}^2,
\end{align*}
where $u_0=1-\sum_{i=1}^nu_i$ and $z_0=-\sum_{i=1}^nz_i$ in the first sum. 
\end{lemma}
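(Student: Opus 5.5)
The plan is to reduce the statement to Lemma \ref{lem.hA1} by expressing the reduced quadratic form $z^Th''(q)A(q)z$ through the full form $\bar z^T\bar h''(\bar q)\bar A(\bar q)\bar z$ with a suitable $\bar z\in Z$. We introduce the linear map $P:\R^{n+g}\to\R^{n+1+g}$, $P z=\bar z=(z_0,z_1,\ldots,z_{n+g})$ with $z_0=-\sum_{i=1}^n z_i$. Its image is exactly the subspace $Z$, and $P$ is the Jacobian of the embedding $q\mapsto\bar q=(1-\sum_{i=1}^n u_i,q)$.

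First, by the chain rule, $h(q)=\bar h(\bar q(q))$ with $\bar q$ affine in $q$, so $h''(q)=P^T\bar h''(\bar q)P$. Second, we identify $A(q)$ in terms of $\bar A$. The $u_0$-equation is redundant: summing the flux expressions over $i=0,\ldots,n$ gives zero, since $\sum_i(J_i-u_i\sum_kJ_k)=0$ when $\sum_iu_i=1$. The equations for $i=1,\ldots,n+g$ are obtained from $\bar A(\bar q)\na\bar q$ with $\na\bar q=P\na q$. This gives $A(q)=R\bar A(\bar q)P$, where $R$ deletes the zeroth row. The definition $A_{ij}=\bar A_{ij}-\bar A_{i0}$ is exactly $\bar A P$ restricted to rows $1,\ldots,n+g$, which confirms this.

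The key algebraic step, and the main obstacle, is showing that $P^T\bar h''\bar A P=P^T\bar h''P\,R\bar A P$. Equivalently, we need $\bar h''\bar A P w=\bar h''PR\bar AP w$ modulo $\ker P^T$. Here $\ker P^T$ is spanned by $(1,\ldots,1,0,\ldots,0)$ in the $u$-block. To handle this, note that $P R\bar v=\bar v$ for any $\bar v$ whose first $n+1$ components sum to zero. We then check that $\bar A(\bar q)\bar z$ has this property whenever $\sum_{i=0}^nu_i=1$, for all $\bar z$:
\[
\sum_{i=0}^n\bar Q^{11}_{ij}=\alpha_ju_j^{m_j-1}\Big(1-\sum_{i}u_i\Big)=0,\qquad \sum_{i=0}^n\bar Q^{12}_{ij}=\sum_iu_i\chi_{ij}\Big(1-\sum_ku_k\Big)=0 .
\]
Hence $PR\bar A(\bar q)P=\bar A(\bar q)P$, and therefore
\[
z^Th''(q)A(q)z=z^TP^T\bar h''(\bar q)PR\bar A(\bar q)Pz=\bar z^T\bar h''(\bar q)\bar A(\bar q)\bar z .
\]

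Finally, for $q\in\mathcal D\times\R^g$ we have $\bar q\in(0,1)^{n+1}\times\R^g$ and $\bar z=Pz\in Z$. Lemma \ref{lem.hA1} then applies directly and yields the claimed lower bound, with $z_0=-\sum_{i=1}^nz_i$ and the same $\gamma(K)$. The only care needed is consistent bookkeeping of the index shift between $\bar z$ and $z$. It also remains to confirm that the reduced equations are indeed $R\bar A P$, i.e.\ that no contribution from the $u_0$-column is lost. This holds because the column operation $\bar A_{ij}-\bar A_{i0}$ is precisely multiplication by $P$.
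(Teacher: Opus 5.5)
Your proof is correct and is essentially the paper's argument. Your $P$ is the paper's Jacobian matrix $G$, and your identity $PR\bar A(\bar q)P=\bar A(\bar q)P$, obtained from the vanishing column sums of $\bar Q^{11}$ and $\bar Q^{12}$, is exactly the paper's check that $GA(q)z=\bar A(\bar q)\bar z$ via $\bar A_{0k}=-\sum_{j=1}^n\bar A_{jk}$, which together with $h''(q)=G^T\bar h''(\bar q)G$ reduces the claim to Lemma~\ref{lem.hA1} with $\bar z=Gz\in Z$.
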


\begin{proof}
We define the matrix $G\in\R^{(n+1+g)\times(n+g)}$ by $G_{ij}=-1$ for $i=0$, $j=1,\ldots,n$ and $G_{ij}=\delta_{ij}$ else. This matrix is simply the Jacobian of the mapping $q\mapsto(1-\sum_{i=1}^nu_i,u_1,\ldots,u_n,c)$. Therefore, by the chain rule, $h''(q) = G^T\bar h''(\bar q)G$. Next, let $z\in\R^{n+g}$ and set $z_0=-\sum_{i=1}^nz_i$. Then $\bar z=(z_0,\ldots,z_{n+g})\in Z$ (see Lemma \ref{lem.hA1}) and $Gz=\bar z$. It follows from the definition of $A(q)$ that, for $i=1,\ldots,n+g$,
\begin{align*}
  (A(q)z)_i &= \sum_{k=1}^n(\bar A_{ik}-\bar A_{i0})z_k
  + \sum_{k=n+1}^{n+g}\bar A_{ik}z_k
  = \sum_{k=1}^{n+g}\bar A_{ik}z_k - \bar A_{i0}\sum_{k=1}^n z_k \\
  &= \sum_{k=0}^{n+g}\bar A_{ik}z_k = (\bar A(\bar q)\bar z)_i,
\end{align*}
and consequently, $(GA(q)z)_i = (A(q)z)_i = (\bar A(\bar q)\bar z)_i$ for $i=1,\ldots,n+g$. If $i=0$, we deduce from the definition of $\bar A(\bar q)$ that $\bar A_{0k} = - \sum_{j=1}^n\bar A_{jk}$ for $k=0,\ldots,n+g$
\begin{align*}
  (GAz)_0 = -\sum_{j=1}^n(A(q)z)_j 
  = -\sum_{j=1}^n(\bar A(\bar q)\bar z)_j
  = -\sum_{j=1}^n\sum_{k=0}^{n+g}\bar A_{jk}z_k
  = \sum_{k=0}^{n+g}\bar A_{0k}z_k = (\bar A(\bar q)\bar z)_0.
\end{align*}
Therefore, we have
\begin{align*}
  z^Th''(q)A(q)z = z^T(G^T\bar h''(\bar q)G)A(q)z
  = (Gz)^T\bar h''(\bar q)(GA(q))z
  = \bar z^T\bar h''(\bar q)\bar A(\bar q)\bar z,
\end{align*}
and the result follows from Lemma \ref{lem.hA1}. 
\end{proof}


\section{Proof of the existence theorem}\label{sec.ex}

In this section, we prove Theorem \ref{thm.ex}. To this end, we introduce the entropy variables
\begin{align*}
  w_i &= \frac{\pa h}{\pa u_i} = \log\frac{u_i}{u_0}
  - \sum_{k=1}^g\frac{\chi_{ik}-\chi_{0k}}{D_k}c_k
  \quad\mbox{for } i=1,\ldots,n, \\
  w_{j+n} &= \frac{\pa h}{\pa c_j} 
  = -\sum_{k=0}^n\frac{\chi_{kj}}{D_j}u_k + Kc_j
  \quad\mbox{for }j=1,\ldots,g,
\end{align*}
where $u_0=1-\sum_{i=1}^n u_i$. Then, with the mobility matrix $B(q)=A(q)h''(q)^{-1}$, we can formulate \eqref{1.u}--\eqref{1.c} as
\begin{align*}
  \pa_t q_i = \diver\bigg(\sum_{j=1}^{n+g}B_{ij}(q)\na w_j\bigg)
  + r_i(q)\quad\mbox{in }\Omega,\ t>0,\ i=1,\ldots,n+g,
\end{align*}
recalling definition \eqref{3.r} of $r_i(q)=r_i(\bar q)$. The mobility matrix is positive semidefinite, since by Lemma \ref{lem.hA2}, for $z\in\R^{n+g}$,
\begin{align*}
  z^T B(q)z &= \big(h''(q)^{-1}z\big)^T h''(q)A(q)
  \big(h''(q)^{-1}z\big) \\
  &\ge \sum_{i=0}^n \frac{\alpha_i}{2}u_i^{m_i-2}\big(h''(q)^{-1}z\big)_i^2
  + \gamma(K)\sum_{j=1}^g\big(h''(q)^{-1}z\big)_{j+n}^2.
\end{align*}

We construct now a time-discrete and regularized problem. We approximate the initial datum such that $u^0\in\mathcal{D}$. The general case $u^0\in\overline{\mathcal{D}}$ can be obtained by introducing first $u_\delta^0 = (u^0+\delta)/(1+\delta(n+1))\in\mathcal{D}$ and later passing to the limit $\delta\to 0$. Let $T>0$, $N\in\N$, and $\tau=T/N$. We choose $\kappa\in\N$ such that $\kappa>d/2$, ensuring the compactness of the embedding $H^\kappa(\Omega)\hookrightarrow L^\infty(\Omega)$. Let $w^{k-1}\in L^\infty(\Omega;\R^{n+g})$. If $k=1$, we set $w^0=h'(u^0,c^0)$. We want to find a solution $w^k\in H^\kappa(\Omega;\R^{n+g})$ to the following variational problem:
\begin{align}\label{3.approx}
  \frac{1}{\tau}\int_\Omega&\big((h')^{-1}(w^k)-(h')^{-1}(w^{k-1})\big)
  \cdot\phi dx + \int_\Omega\na\phi^T:B(w^k)\na w^k dx \\
  &+ \eps b(w^k,\phi) = \int_\Omega r((h')^{-1}(w^k))\cdot\phi dx
  \nonumber 
\end{align}
for all $\phi\in H^\kappa(\Omega;\R^{n+g})$, where $\eps>0$. The functional $b(w^k,\phi)$ is defined by
\begin{align*}
  b(w^k,\phi) = \int_\Omega\bigg(\sum_{|\beta|=\kappa}
  D^\beta w^k\cdot D^\beta\phi + w^k\cdot\phi\bigg)dx,
\end{align*}
where $D^\beta$ is a partial derivative of order $\kappa$ and $\beta\in\N^d$ is a multiindex. 

We want to apply \cite[Lemma 5]{Jue15}. The reaction term satisfies
\begin{align*}
  r(q)\cdot h'(q) = -\sum_{j=1}^g\bigg(\sum_{k=0}^n b_{jk}u_k
  - \lambda_jc_j\bigg)
  \bigg(\sum_{i=0}^n\frac{\chi_{ij}}{D_j}u_i - Kc_j\bigg) \le C_r
\end{align*}
for some $C_r>0$ and all $q=(u,c)\in\mathcal D\times\R^g$. This estimate as well as the statements of Lemmas \ref{lem.h} and \ref{lem.hA2} show that the hypotheses of \cite[Lemma 5]{Jue15} are satisfied. Consequently, we obtain the existence of a weak solution $w^k\in H^\kappa(\Omega;\R^{n+g})$ to \eqref{3.approx} satisfying $u(w^k(x))\in\mathcal{D}$ for $(u(w^k(x)),c(w^k(x))):=(h')^{-1}(w^k(x))$, and the discrete entropy inequality
\begin{align}\label{3.dei}
  \int_\Omega h(u^k,c^k)dx
  &+ \tau\int_\Omega(\na w^k)^T:B(w^k)\na w^k dx 
  + \eps b(w^k,w^k) \\
  &\le C_r\tau\mbox{meas}(\Omega)
  + \int_\Omega h(u^{k-1},c^{k-1})dx \nonumber 
\end{align}
holds, where $u^k=u(w^k)$ and $c^k=c(w^k)$. Inequality \eqref{3.dei} yields estimates uniform in $\eps$ and $\tau$. Indeed, define the piecewise constant functions
\begin{align*}
  w^{(\tau)}(x,t) = w^k(x), \quad
  q^{(\tau)}(x,t) = (u^{(\tau)},c^{(\tau)})(x,t) 
  = q^k(x) = (u^k(x),c^k(x))
\end{align*}
for $x\in\Omega$, $t\in((k-1)\tau,k\tau]$, $k=1,\ldots,N$. At time $t=0$, we set $w^{(\tau)}(0)=w^0=h'(u^0,c^0)$ and $q^{(\tau)}(0)=q^0=(u^0,c^0)$. We introduce the shift operator
\begin{align*}
  (\sigma_\tau q^{(\tau)})(x,t) = q^{k-1}(x) = (h')^{-1}(w^{k-1}(x)).
\end{align*}

\begin{lemma}\label{lem.est1}
There exists $C>0$ independent of $(\eps,\tau)$ such that 
\begin{align*}
  \|u^{(\tau)}\|_{L^\infty(\Omega_T)}
  + \|c^{(\tau)}\|_{L^2(0,T;H^1(\Omega))}
  + \sqrt\eps\|w^{(\tau)}\|_{L^2(0,T;H^\kappa(\Omega))} &\le C, \\
  \sum_{i=0}^n\alpha_i\|(u_i^{(\tau)})^{m_i/2}\|_{L^2(0,T;H^1(\Omega))}
  &\le C.
\end{align*}
\end{lemma}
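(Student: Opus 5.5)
Summing the discrete entropy inequality \eqref{3.dei} over $k=1,\ldots,j$ for any $j\le N$ telescopes the entropy terms and yields
\begin{align*}
  \int_\Omega h(u^j,c^j)dx + \tau\sum_{k=1}^j\int_\Omega(\na w^k)^T:B(w^k)\na w^k dx
  + \eps\tau\sum_{k=1}^j b(w^k,w^k)
  \le C_rT\,\mbox{meas}(\Omega) + \int_\Omega h(u^0,c^0)dx .
\end{align*}
The right-hand side is finite by (H2), since $u^0\in\mathcal D$ (after the $\delta$-regularization) and $c^0\in H^1(\Omega)\subset L^2(\Omega)$.

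The $L^\infty$ bound for $u^{(\tau)}$ is immediate, because $u^k\in\mathcal D$ pointwise gives $0\le u_i^k\le 1$. Next I bound $h$ from below. The terms $u_i(\log u_i-1)$ lie in $[-1,0]$. Young's inequality gives
\[
  |\chi_{ij}u_ic_j/D_j|\le C+\tfrac K4c_j^2 ,
\]
so $h(q)\ge \frac K4|c|^2-C$. This yields $\sup_k\|c^k\|_{L^2}\le C$ and, in particular, that $\int h(u^k,c^k)dx\ge -C$. Using $b(w^k,w^k)\ge C\|w^k\|_{H^\kappa}^2$, the bound on $\sqrt\eps\|w^{(\tau)}\|_{L^2(0,T;H^\kappa)}$ follows.

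For the gradient estimates, I rewrite the entropy production as
\[
  (\na w^k)^T:B\na w^k=(\na q^k)^T:h''(q^k)A(q^k)\na q^k ,
\]
using $\na w^k=h''(q^k)\na q^k$ and $B=Ah''^{-1}$. This is legitimate since $q^k=(h')^{-1}(w^k)$ is smooth in $w^k\in H^\kappa\subset L^\infty$, and $q^k$ takes values in the interior, so the chain rule applies. Applying Lemma \ref{lem.hA2} columnwise with $z=\pa_\ell q^k$ gives
\[
  \ge\sum_{i=0}^n\frac{\alpha_i}{2}(u_i^k)^{m_i-2}|\na u_i^k|^2+\gamma(K)\sum_j|\na c_j^k|^2 ,
\]
where $\na u_0^k=-\sum_{i\ge1}\na u_i^k$ is consistent with $z_0$. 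Since
\[
  (u_i)^{m_i-2}|\na u_i|^2=\frac{4}{m_i^2}|\na u_i^{m_i/2}|^2 ,
\]
this gives $L^2(\Omega_T)$ bounds on $\alpha_i^{1/2}\na(u_i^{(\tau)})^{m_i/2}$ and on $\na c^{(\tau)}$. Together with the $L^\infty$ bound on $u_i$ (hence $u_i^{m_i/2}\le1$) and the $L^2$ bound on $c$, this gives the stated $L^2(0,T;H^1)$ norms. For $i=0$ with $\alpha_0=0$ the estimate is trivial; otherwise the factor $\alpha_i$ versus $\alpha_i^{1/2}$ only changes constants.

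The main obstacle is the lower bound on $h$, i.e. controlling the indefinite coupling $-\chi_{ij}u_ic_j/D_j$ by $\frac K2c_j^2$. This works because $u_i$ is bounded thanks to the volume-filling constraint, and it is exactly where the partial boundedness of the codomain is used in place of a bounded domain as in \cite{Jue15}. A secondary point is to justify the chain rule for $u_i^{m_i/2}$ near $u_i=0$, which I would avoid by noting that the discrete solutions satisfy $u^k\in\mathcal D$, so $u_i^k$ is bounded away from zero on the compact range of the continuous function $w^k$.
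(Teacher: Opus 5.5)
Your proof is correct and follows the paper's argument: you telescope the discrete entropy inequality \eqref{3.dei}, use the generalized Poincar\'e inequality for $b(w^k,w^k)$, and apply Lemma~\ref{lem.hA2} through the identity $(\na w^k)^T:B(w^k)\na w^k=(\na q^k)^T:h''(q^k)A(q^k)\na q^k$. You also spell out the lower bound $h(q)\ge\frac K4|c|^2-C$, which the paper leaves implicit and which supplies the $L^2$ bound on $c^{(\tau)}$; to obtain it with that constant, apply Young's inequality to $\sum_{i=0}^n\chi_{ij}u_ic_j/D_j$ after using $\sum_{i=0}^n u_i=1$, since summing your termwise bound over $i$ would give $(n+1)\frac K4c_j^2$ instead.
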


\begin{proof}
The lemma follows from the discrete entropy inequality \eqref{3.dei} after summation over $k=1,\ldots,\ell$ with $\ell\tau\le T$:
\begin{align*}
  \int_\Omega h(q^k)dx
  &+ \tau\sum_{k=1}^\ell\int_\Omega(\na w^k)^T:B(w^k)\na w^k dx
  + \eps\tau\sum_{k=1}^\ell b(w^k,w^k) \\
  &\le C_r\tau\ell\mbox{meas}(\Omega) + \int_\Omega h(q^0)dx. \nonumber 
\end{align*}
By the generalized Poincar\'e inequality,
\begin{align*}
  \eps b(w^k,w^k) \ge \eps C\|w^k\|_{H^\kappa(\Omega)}^2. 
\end{align*}
We deduce from Lemma \ref{lem.hA2} and 
\begin{align*}
  \int_\Omega(\na w^k)^T:B(w^k)\na w^k dx
  &= \int_\Omega(\na q^k)^T:h''(q^k)A(q^k)\na q^k dx \\
  &\ge \int_\Omega\bigg(\sum_{i=0}^n\frac{\alpha_i}{2}
  (u_i^k)^{m_i-2}|\na u_i^k|^2 + \gamma(K)
  \sum_{j=1}^g|\na c_j^k|^2\bigg)dx 
\end{align*}
the bounds for $(u_i^{(\tau)})^{m_i/2}$ and $c_j^{(\tau)}$ in $L^2(0,T;H^1(\Omega))$. 
\end{proof}

\begin{lemma}\label{lem.est2}
There exists $C>0$ independent of $(\eps,\tau)$ such that 
\begin{align*}
  \|u^{(\tau)}-\sigma_\tau u^{(\tau)}\|_{L^2(0,T;H^\kappa(\Omega)')}
  + \|c^{(\tau)}-\sigma_\tau c^{(\tau)}\|_{L^2(0,T;H^\kappa(\Omega)')}
  \le \tau C.
\end{align*}
\end{lemma}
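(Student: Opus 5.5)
We test the discrete scheme \eqref{3.approx} with an arbitrary $\phi\in L^2(0,T;H^\kappa(\Omega;\R^{n+g}))$ and estimate every term that is not the discrete time derivative. On each interval $((k-1)\tau,k\tau]$ we have $(h')^{-1}(w^k)-(h')^{-1}(w^{k-1}) = q^{(\tau)}-\sigma_\tau q^{(\tau)}$, so after integrating over time,
\begin{align*}
  \frac1\tau\bigg|\int_0^T\!\!\int_\Omega(q^{(\tau)}-\sigma_\tau q^{(\tau)})\cdot\phi\,dxdt\bigg|
  &\le \bigg|\int_0^T\!\!\int_\Omega \na\phi^T:B(w^{(\tau)})\na w^{(\tau)}dxdt\bigg|
  + \eps\bigg|\int_0^T b(w^{(\tau)},\phi)dt\bigg| \\
  &\phantom{xx}+ \bigg|\int_0^T\!\!\int_\Omega r(q^{(\tau)})\cdot\phi\,dxdt\bigg|.
\end{align*}
The goal is to bound the right-hand side by $C\|\phi\|_{L^2(0,T;H^\kappa(\Omega))}$. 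Duality then gives the claim, with the $u$- and $c$-components read off separately. The component $u_0=1-\sum_i u_i$ follows automatically.

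The regularization and reaction terms are straightforward. For the regularization term, Cauchy--Schwarz gives
\[
  \eps|b(w,\phi)|\le \sqrt\eps\,\big(\sqrt\eps\|w^{(\tau)}\|_{L^2H^\kappa}\big)\|\phi\|_{L^2H^\kappa},
\]
which is bounded by Lemma \ref{lem.est1} (even with an extra factor $\sqrt\eps\le1$). For the reaction term, $r$ is linear in $(u,c)$. We have $u^{(\tau)}$ bounded in $L^\infty$, and $c^{(\tau)}$ is bounded in $L^2(\Omega_T)$ by Lemma \ref{lem.est1}. Together with $H^\kappa\hookrightarrow L^2$, this gives the bound.

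The main term is the diffusion term, and I would not work with $\na w$ directly. Instead I would use the identity $B(w)\na w=A(q)h''(q)^{-1}h''(q)\na q=A(q)\na q$ and write out the fluxes explicitly with \eqref{3.barQ}. For $i=1,\dots,n$ the flux is
\[
  \alpha_iu_i^{m_i-1}\na u_i-u_i\sum_{k=0}^n\alpha_ku_k^{m_k-1}\na u_k
  +\sum_j\Big(u_i\chi_{ij}-u_i\sum_k u_k\chi_{kj}\Big)\na c_j,
\]
and for the $c_j$ equations it is $D_j\na c_j$. The key rewriting is
\[
  \alpha_ku_k^{m_k-1}\na u_k=\frac{2\alpha_k}{m_k}\,u_k^{m_k/2}\,\na u_k^{m_k/2},
\]
where $u_k^{m_k/2}\le1$ and $\alpha_k\na u_k^{m_k/2}$ is bounded in $L^2(\Omega_T)$ by Lemma \ref{lem.est1}. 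For $k=0$ with $\alpha_0>0$, this bound also comes from that lemma, since its sum includes $i=0$. The remaining coefficients $u_i$ and $u_i u_k$ are bounded by $1$, and $\na c_j$ is bounded in $L^2(\Omega_T)$. Hence the whole flux is bounded in $L^2(\Omega_T)$, and the diffusion term is at most $C\|\na\phi\|_{L^2}$.

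The one delicate point is whether this identity holds for the discrete solution. It needs $q^k=(h')^{-1}(w^k)$ to be smooth enough for the chain rule, i.e. $w^k\in H^\kappa\hookrightarrow L^\infty$ with $\na w^k\in L^2$, together with $u^k\in\mathcal D$ pointwise. Since $(h')^{-1}$ is smooth on $\R^{n+g}$ (Lemma \ref{lem.h} together with the invertibility of $h''$, which follows from the positive definiteness in Lemma \ref{lem.hA2}), this holds. Collecting the four bounds completes the proof.
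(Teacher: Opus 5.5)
Your proposal is correct and follows essentially the same route as the paper: you test \eqref{3.approx} with $\phi$, bound the regularization and reaction terms via Lemma \ref{lem.est1}, and show that $A(q^{(\tau)})\na q^{(\tau)}$ is bounded in $L^2(\Omega_T)$ through $u_k^{m_k-1}\na u_k=(2/m_k)u_k^{m_k/2}\na u_k^{m_k/2}$. The differences are cosmetic: you write the flux in the full variables, so the $\alpha_0$ term needs no separate treatment (the paper rewrites it using $\sum_{k\ge1}\na u_k=-\na u_0$), and you make explicit the chain-rule identity $B(w^k)\na w^k=A(q^k)\na q^k$, which the paper uses tacitly.
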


\begin{proof}
Let $\phi\in L^2(0,T;H^\kappa(\Omega;\R^{n+g}))$. It follows from the weak formulation \eqref{3.approx} that
\begin{align*}
  \frac{1}{\tau}&\bigg|\int_0^T\int_\Omega
  (q^{(\tau)} - \sigma_\tau q^{(\tau)})\cdot\phi dxdt\bigg|
  \le \big\|A(q^{(\tau)})\na q^{(\tau)}\big\|_{L^2(\Omega_T)}
  \|\na\phi\|_{L^2(\Omega_T)} \\
  &\phantom{xx}+ \eps\|w^{(\tau)}\|_{L^2(0,T;H^\kappa(\Omega))}
  \|\phi\|_{L^2(0,T;H^\kappa(\Omega))}
  + \|r(q^{(\tau)})\|_{L^2(\Omega_T)}\|\phi\|_{L^2(\Omega_T)}.
\end{align*}
The last two terms are uniformly bounded. For the remaining term, we observe that
\begin{align*}
  (A(q^{(\tau)})\na q^{(\tau)})_i 
  &= \alpha_i(u_i^{(\tau)})^{m_i-1}\na u_i^{(\tau)}
  + u_i^{(\tau)}\sum_{j=1}^g\chi_{ij}\na c_j^{(\tau)} 
  - u_i^{(\tau)}\sum_{k=1}^n \alpha_k(u_k^{(\tau)})^{m_k-1}
  \na u_k^{(\tau)} \\
  &\phantom{xx}+ u_i^{(\tau)}\sum_{k=1}^n\alpha_0
  (u_0^{(\tau)})^{m_0-1}\na u_k^{(\tau)}
  - u_i^{(\tau)}\sum_{k=0}^n u_k^{(\tau)}\sum_{j=1}^g
  \chi_{kj}\na c_j^{(\tau)}. \nonumber 
\end{align*} 
The bounds of Lemma \ref{lem.est1} imply that $(u_k^{(\tau)})^{m_k-1} \na u_k^{(\tau)} = (2/m_k)(u_k^{(\tau)})^{m_k/2} \na(u_k^{(\tau)})^{m_k/2}$ and $\na c_j^{(\tau)}$ are uniformly bounded in $L^2(\Omega_T)$. Therefore, all terms except the fourth one on the right-hand side are uniformly bounded. We reformulate the fourth term:
\begin{align*}
  u_i^{(\tau)}\sum_{k=1}^n\alpha_0
  (u_0^{(\tau)})^{m_0-1}\na u_k^{(\tau)}
  = -\alpha_0 u_i^{(\tau)}(u_0^{(\tau)})^{m_0-1}\na u_0^{(\tau)}
  = -\frac{2\alpha_0}{m_0}u_i^{(\tau)}
  (u_0^{(\tau)})^{m_0/2}\na(u_0^{(\tau)})^{m_0/2},
\end{align*}
and this expression is uniformly bounded in $L^2(\Omega_T)$ as well. We have proved that the family $A(q^{(\tau)})\na q^{(\tau)}$ is uniformly bounded in $L^2(\Omega_T)$. Therefore, for some constant $C>0$ independent of $(\eps,\tau)$,
\begin{align*}
  \frac{1}{\tau}\bigg|\int_0^T\int_\Omega
  (q^{(\tau)} - \sigma_\tau q^{(\tau)})\cdot\phi dxdt\bigg|
  \le C\|\na\phi\|_{L^2(\Omega_T)},
\end{align*}
and taking the supremum over all $\phi$ such that $\|\phi\|_{L^2(0,T;H^\kappa(\Omega))}=1$ finishes the proof.
\end{proof}

The uniform estimates of Lemmas \ref{lem.est1} and \ref{lem.est2} allow us to perform the limit $(\eps,\tau)\to 0$. In particular, since $m_i\ge 1$, we can apply the Aubin--Lions lemma in the version of \cite[Theorem 3]{CJL14} to obtain the existence of a subsequence that is not relabeled such that
\begin{align*}
  u_i^{(\tau)}\to u_i, \quad c_j^{(\tau)}\to c_j\quad  
  \mbox{strongly in }L^2(\Omega_T)\mbox{ as }(\eps,\tau)\to 0
\end{align*}
for $i=0,\ldots,n$, $j=1,\ldots,g$. Thanks to the $L^\infty(\Omega_T)$ bound, the strong convergence for $u_i^{(\tau)}$ also holds in $L^r(\Omega_T)$ for all $r<\infty$. Moreover, it holds that
\begin{align*}
  \alpha_i \na(u_i^{(\tau)})^{m_i/2}\rightharpoonup \alpha_i\na u_i^{m_i/2}
  &\quad\mbox{weakly in }L^2(\Omega_T),\ i=0,\ldots,n, \\
  \na c_j^{(\tau)}\rightharpoonup\na c_j
  &\quad\mbox{weakly in }L^2(\Omega_T),\ j=1,\ldots,g, \\
  \tau^{-1}(u_i^{(\tau)}-\sigma_\tau u_i^{(\tau)})
  \rightharpoonup\pa_t u_i 
  &\quad\mbox{weakly in }L^2(0,T;H^\kappa(\Omega)'),\ i=0,\ldots,n, \\
  \tau^{-1}(c_j^{(\tau)}-\sigma_\tau c_j^{(\tau)})
  \rightharpoonup\pa_t c_j 
  &\quad\mbox{weakly in }L^2(0,T;H^\kappa(\Omega)'),\ j=1,\ldots,g, \\
  \eps w_i^{(\tau)}\to 0 
  &\quad\mbox{strongly in }L^2(0,T;H^\kappa(\Omega)).
\end{align*}
These convergences are sufficient to pass to the limit in the diffusion term:
\begin{align*}
  A(q^{(\tau)})\na q^{(\tau)}\rightharpoonup A(q)\na q
  \quad\mbox{weakly in }L^1(\Omega_T),
\end{align*}
and in view of the uniform boundedness of $A(q^{(\tau)})\na q^{(\tau)}$ in $L^2(\Omega_T)$, this weak convergence also holds in $L^2(\Omega_T)$. Hence, we can pass to the limit $(\eps,\tau)\to 0$ in \eqref{3.approx} to conclude that $q=(u,c)$ is a solution to
\begin{align*}
  \int_0^T\langle \pa_t q,\phi\rangle dt
  + \int_0^T\int_\Omega\na\phi^T:A(q)\na q dxdt
  = \int_0^T\int_\Omega r(q)\cdot\phi dxdt
\end{align*}
for all $\phi\in L^2(0,T;H^\kappa(\Omega;\R^{n+g}))$ and, by density, also for all $\phi\in L^2(0,T;H^1(\Omega;\R^{n+g}))$. The initial datum is satisfied in the sense of $H^\kappa(\Omega)'$. The restriction to approximating initial data with values in $\mathcal{D}$ is removed by passing to the de-regularization limit, yielding the result for all $u^0 \in \overline{\mathcal{D}}$.

By definition of the entropy density $h$, we have $u^{(\tau)}(x,t) = (\pa h/\pa u)^{-1}(w^{(\tau)}(x,t))\in\mathcal D$ and thus, in the limit, $u(x,t)\in\overline{\mathcal{D}}$ for a.e.\ $(x,t)\in\Omega_T$. We deduce from $b_{jk}u_k\ge 0$ that $\pa_t c_j - D_j\Delta c_j + \lambda_j c_j\ge 0$ and we conclude from the minimum principle and $c_j^0\ge 0$ that $c_j\ge 0$ in $\Omega_T$. This finishes the proof of Theorem \ref{thm.ex}. 


\section{Proof of the weak--strong uniqueness property}

First, we verify that the solution constructed in Theorem \ref{thm.ex} satisfies an entropy inequality with the entropy density $h_2$, defined in \eqref{1.h2}. 

\begin{lemma}
Let $1\le m_i\le 2$ and let $\bar q=(\bar u,c)$ be the weak solution to \eqref{1.u}--\eqref{1.bc} constructed in Theorem \ref{thm.ex} with $\bar u=(u_0,\ldots,u_n)$ and $u_0=1-\sum_{i=1}^n u_i$. Then, for $t\in(0,T)$,
\begin{align}\label{4.ei}
  \int_\Omega h_2(\bar q(t))dx
  &+ \int_0^t\int_\Omega(\na\bar q)^T:h_2''(\bar q)
  \bar A(\bar q)\na\bar q dxds \\
  &\le \int_\Omega h_2(\bar q(0))dx
  + K\int_0^t\int_\Omega\sum_{j=1}^g c_j\bigg(\sum_{k=0}^n
  b_{jk}u_k - \lambda_j c_j\bigg)dxds. \nonumber 
\end{align}
\end{lemma}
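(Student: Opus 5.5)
We prove the $h_2$ entropy inequality \eqref{4.ei} not directly for the limit $q$, but at the level of the time-discrete regularized scheme \eqref{3.approx}, and then pass to the limit $(\eps,\tau)\to0$ using lower semicontinuity. The entropy $\bar h$ used in the existence proof differs from $h_2$ by the bilinear coupling $-\sum_{i,j}\chi_{ij}u_ic_j/D_j$. So the plan is to compute the entropy balance for this coupling term separately and subtract it from the $\bar h$-inequality. A cleaner alternative is to test the discrete equations directly with $h_2'$ in the reduced variables, i.e.\ with $\phi=(\log(u_i^k/u_0^k))_{i=1}^n$ and $\phi_{n+j}=Kc_j^k$. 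I will pursue this second route, since $u^k\in\mathcal D$ is strictly positive and $w^k\in H^\kappa\hookrightarrow L^\infty$. The latter gives $u^k_i\ge\mu_k>0$, so $\log(u_i^k/u_0^k)\in H^1\cap L^\infty$ is an admissible test function after a density argument (test functions in $H^\kappa$ are dense in $H^1$).

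\textbf{Step 1 (discrete inequality).} Testing \eqref{3.approx} with $h_2'(q^k)$ and using convexity of $h_2$, we get
\[
\frac1\tau\int_\Omega (q^k-q^{k-1})\cdot h_2'(q^k)\,dx\ge\frac1\tau\int_\Omega\big(h_2(q^k)-h_2(q^{k-1})\big)dx .
\]
The diffusion term equals $\int_\Omega(\na\bar q^k)^T:h_2''(\bar q^k)\bar A(\bar q^k)\na\bar q^k\,dx$, by the same $G$-reduction argument as in Lemma \ref{lem.hA2}, which only uses the structure of $\bar A$. The reaction term gives exactly $K\sum_j c_j^k(\sum_k b_{jk}u_k^k-\lambda_jc_j^k)$.

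\textbf{Step 2 (the $\eps$-term).} The regularization produces $\eps b(w^k,h_2'(q^k))$, which has no sign. I expect this to be the main obstacle. My first attempt is to bound it by $\eps\|w^k\|_{H^\kappa}\|h_2'(q^k)\|_{H^\kappa}$. This is not uniformly controlled, because $\|h_2'(q^k)\|_{H^\kappa}$ blows up near the boundary of $\mathcal D$. The fallback is to avoid testing the regularized scheme with $h_2'$ altogether. Instead, I will derive \eqref{4.ei} for the limit $q$, using the regularity already established: $\na u_i^{m_i/2},\na c_j\in L^2$, $\pa_t q\in L^2(0,T;H^\kappa{}')$, and the strong convergence. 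Concretely, I will test the limit weak formulation with the regularized entropy variable $\phi_\delta=(\log((u_i+\delta)/(u_0+\delta)))_{i\ge1}$ together with $Kc$. This lies in $L^2(H^1)$ because $|\na\log(u_i+\delta)|\le\delta^{-1}|\na u_i|$ and $u_i\in L^2(H^1)$; the latter holds since $m_i\le2$ gives $\na u_i=(2/m_i)u_i^{1-m_i/2}\na u_i^{m_i/2}$ with $u_i^{1-m_i/2}\le1$. This is precisely where the hypothesis $m_i\le 2$ enters. A chain rule for $\langle\pa_tq,\phi_\delta\rangle$ then yields the time derivative of $\int_\Omega h_{2,\delta}(\bar q)\,dx$, where $h_{2,\delta}=\sum_i (u_i+\delta)(\log(u_i+\delta)-1)+\frac K2|c|^2$. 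For this we can use that $\pa_t q$ actually belongs to $L^2(0,T;H^1{}')$, since the flux $A(q)\na q$ is in $L^2$.

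\textbf{Step 3 ($\delta\to0$).} The dissipation term is
\[
\sum_i\frac{\alpha_iu_i^{m_i-1}}{u_i+\delta}|\na u_i|^2+(\text{mixed and chemotaxis terms}).
\]
The mixed terms involve factors like $u_i/(u_i+\delta)\le1$ multiplying $L^2\cdot L^2$ quantities, so they converge by dominated convergence. The diagonal part is nonnegative and converges to the $h_2$ dissipation by monotone convergence. Fatou's lemma then gives the inequality, rather than an equality, and the terms $h_{2,\delta}\to h_2$ converge uniformly on bounded sets. If instead only the $\tau$-level route succeeds for the $\eps$-term, then Step 1 combined with weak lower semicontinuity of the dissipation, written as the sum of squares $\frac{\alpha_i}{2}|\na u_i^{m_i/2}|^2$ plus $|\na c|^2$ terms, finishes the proof in the same way.
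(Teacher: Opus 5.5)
Your argument is correct. The route you actually complete (Steps 2--3, working on the limit equation) differs from the paper's. Your abandoned Step 1, together with your last sentence, is essentially what the paper does.

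The paper makes the discrete route work by rerunning the scheme \eqref{3.approx} with $h_2$ in place of $h$. This is legitimate for $m_i\le 2$: then $u_i^{m_i-2}\ge 1$ absorbs the $\chi$-cross term, and $h_2''\bar A$ is positive definite on $Z$ for large $K$. With this choice, $h_2'(\bar q^{(\tau)})$ is the entropy variable of the scheme, and the regularization contributes $\eps b(w,w)\ge 0$, which can simply be dropped. That is how the paper avoids the obstacle you correctly flag in Step 2. (There the real trouble is the cross term $\eps b(w^k,h_2'(q^k)-w^k)$, with $h_2'-h'$ affine in $q$, rather than a blow-up at $\pa\mathcal D$.) The paper then uses convexity of $h_2$ for the discrete time derivative. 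It writes the dissipation as a positive definite quadratic form in $\na(u_i^{(\tau)})^{m_i/2}$ and $\na c_j^{(\tau)}$, whose coefficients $(u_i^{(\tau)})^{1-m_i/2}$ are bounded because $m_i\le 2$. It concludes by weak lower semicontinuity.

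Your limit-level argument needs more functional-analytic input:
\begin{itemize}
\item $H^1$ test functions and $\pa_t q\in L^2(0,T;H^1(\Omega)')$;
\item a chain rule for the convex function $h_{2,\delta}$, whose Hessian is bounded on $\overline{\mathcal D}\times\R^g$;
\item the fact that $\na u_i=0$ a.e.\ on $\{u_i=0\}$, so that $\frac{u_i}{u_i+\delta}\na u_i\to\na u_i$, and the contributions of $-u_i\sum_k\alpha_k u_k^{m_k-1}\na u_k$ and $-u_i\sum_k u_k\sum_j\chi_{kj}\na c_j$ tend to $(\sum_{i=0}^n\na u_i)\cdot(\cdots)=0$.
\end{itemize}

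In return it gives more. It applies to every weak solution with the regularity of Theorem \ref{thm.ex}, not only to one produced by a particular scheme. It also yields an \emph{equality}: every term except the nonnegative diagonal one converges, so the monotone limit of the diagonal term is finite and the identity passes to the limit. Your appeal to Fatou undersells this. As a by-product, the same computation would deliver automatically, in that regularity class, the entropy equality \eqref{1.ee} that Theorem \ref{thm.wsu} postulates for the strong solution.
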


\begin{proof}
We formulate \eqref{3.approx} in the variable $\bar q^{(\tau)}=(\bar u^{(\tau)},c^{(\tau)})$, i.e., with $h_2$ instead of $h$ and use the test function $\bar w^{(\tau)}=h'_2(\bar q^{(\tau)})$, where 
\begin{align*}
  \bar w_i=\log u_i^{(\tau)}\mbox{ for }i=0,\ldots,n, \quad
  \bar w_{j+n} =Kc_{j}^{(\tau)}\mbox{ for }j=1,\ldots,g.
\end{align*}
Then, setting $\bar q^{(\tau)}=(\bar u^{(\tau)},c^{(\tau)})=(h_2')^{-1}(\bar w^{(\tau)})$, 
\begin{align*}
  \frac{1}{\tau}\int_\Omega&\big(\bar q^{(\tau)}(t) 
  - \bar q^{(\tau)}(t-\tau)\big)
  \cdot \bar h'(\bar q^{(\tau)}(t))dx
  + \int_\Omega (\na\bar w^{(\tau)})^T:
  \bar B(\bar q^{(\tau)})\na\bar w^{(\tau)}dx \\
  &\le  K\int_\Omega\sum_{j=1}^g c_j^{(\tau)}
  \bigg(\sum_{i=0}^n b_{ji}u_i^{(\tau)} 
  - \lambda_j c_j^{(\tau)}\bigg)dx,
\end{align*}
where $\bar B(\bar q^{(\tau)}) = \bar A(\bar q^{(\tau)})h_2''(\bar u^{(\tau)})^{-1}$. We deduce from $\sum_{i=0}^n\na u_i^{(\tau)}=0$ that
\begin{align*}
  F(\bar q^{(\tau)}) 
  &= (\na\bar w^{(\tau)})^T:\bar B(\bar q^{(\tau)})
  \na\bar w^{(\tau)}
  = (\na q^{(\tau)})^T:\bar h_2''(q^{(\tau)})
  \bar A(\bar q^{(\tau)})\na\bar q^{(\tau)} \\
  &= \sum_{i=0}^n \frac{4\alpha_i}{m_i^2}|\na(u_i^{(\tau)})^{m_i/2}|^2
  + \sum_{i=0}^n\sum_{j=1}^g\frac{2\chi_{ij}}{m_i}
  (u_i^{(\tau)})^{1-m_i/2}\na(u_i^{(\tau)})^{m_i/2}
  \cdot\na c_j^{(\tau)} \\
  &\phantom{xx}+ K\sum_{j=1}^g D_j|\na c_j^{(\tau)}|^2.
\end{align*}
For sufficiently large $K>0$, the functional $F$ is a positive definite quadratic form with respect to $\na(u_i^{(\tau)})^{m_i/2}$ and $\na c_j^{(\tau)}$. Observe that $(u_i^{(\tau)})^{1-m_i/2}$ is well defined uniformly in $\tau$ if $1\le m_i\le 2$. Tonelli's theorem in the version of \cite[Sec.~8.2]{Eva10} implies that $F$ is weakly lower semicontinuous, showing that 
\begin{align*}
  \liminf_{\tau\to 0}\int_\Omega F(\bar q^{(\tau)}) dx
 \ge \int_\Omega(\na\bar q)^T:\bar h_2''(q)\bar A(\bar q)\na\bar q dx.
\end{align*}
It follows from the convexity of $h_2$ that
\begin{align*}
  \frac{1}{\tau}\int_\Omega&\big(h_2(\bar q^{(\tau)}(t))
  - h_2(\bar q^{(\tau)}(t-\tau))\big)dx
  + \int_\Omega F(\bar q^{(\tau)}) dx \\
  &\le K\int_\Omega\sum_{j=1}^g c_j^{(\tau)}
  \bigg(\sum_{k=0}^n b_{jk}u_k^{(\tau)} 
  - \lambda_j c_j^{(\tau)}\bigg)dx.
\end{align*}
Therefore, integrating over $t$ and taking the limit inferior, we obtain the desired entropy inequality. 
\end{proof}

We proceed to the proof of Theorem \ref{thm.wsu}. Let $\bar q=(\bar u,c)$ be a weak solution and $\widehat{\bar q}=(\widehat{\bar u},\widehat c)$ be a ``strong'' solution to \eqref{1.u}--\eqref{1.bc}. Taking into account entropy inequality \eqref{4.ei} for $\bar q$, entropy equality \eqref{1.ee} for $\widehat{\bar q}$, as well as
\begin{align*}
  \int_\Omega&(\bar q-\widehat{\bar q})
  \cdot\bar h_2'(\widehat{\bar q})dx\Big|_t
  - \int_\Omega(\bar q-\widehat{\bar q})
  \cdot h_2'(\widehat{\bar q})dx\Big|_0 \\
  &= \int_0^t\big\langle\pa_t h_2'(\widehat{\bar q}),
  \bar q-\widehat{\bar q}\big\rangle ds
  + \int_0^t\big\langle\pa_t(\bar q-\widehat{\bar q}),
  h_2'(\widehat{\bar q})\big\rangle ds,
\end{align*}
we obtain for the relative entropy \eqref{1.rel}:
\begin{align*}
  \int_\Omega& h(\bar q|\widehat{\bar q})(t)dx 
  - \int_\Omega h(\bar q|\widehat{\bar q})(0)dx\\
  &\le -\int_0^t\int_\Omega \big((\na h_2'(\bar q))^T
  :\bar B(\bar q)\na\bar h_2'(\bar q)
  - (\na\bar h_2'(\widehat{\bar q}))^T:\bar B(\widehat{\bar q})
  \na h_2'(\widehat{\bar q})\big)dxds \\
  &\phantom{xx}- \int_0^t\langle\pa_t h_2'(\widehat{\bar q}),
  \bar q-\widehat{\bar q}\rangle ds
  - \int_0^t\big\langle\pa_t(\bar q - \widehat{\bar q}),
  h_2'(\widehat{\bar q})\big\rangle ds \\
  &\phantom{xx}+ K\int_0^t\int_\Omega(r(\bar q)\cdot c 
  - r(\widehat{\bar q})\cdot\widehat{c})dxds,
\end{align*}
where $\bar B(\widehat{\bar q}) = \bar A(\widehat{\bar q})( h_2'')^{-1}(\widehat{\bar q})$. After inserting the evolution equations, regrouping the gradient terms, and simplifying the reaction terms, we arrive at
\begin{align}\label{4.I123}
  \int_\Omega& h(\bar q|\widehat{\bar q})(t)dx 
  - \int_\Omega h(\bar q|\widehat{\bar q})(0)dx \\
  &\le -\int_0^t\int_\Omega \big((\na h_2'(\bar q))^T
  :\bar B(\bar q)\na h_2'(\bar q)
  - (\na h_2'(\widehat{\bar q}))^T:\bar B(\widehat{\bar q})
  \na h_2'(\widehat{\bar q})\big)dxds \nonumber \\
  &\phantom{xx}+ \int_0^t\int_\Omega\big[\na\big(h_2''(\widehat{\bar q})
  (\bar q-\widehat {\bar q})\big)\big]^T:
  \bar B(\widehat{\bar q})\na h_2'(\widehat{\bar q})dxds 
  \nonumber \\
  &\phantom{xx}+ \int_0^t\int_\Omega(\na h_2'(\widehat{\bar q}))^T
  :\big(\bar B(\bar q)\na h_2'(\bar q) 
  - \bar B(\widehat{\bar q})\na h_2'(\widehat{\bar q})\big)
  dxds \nonumber \\
  &\phantom{xx}+ K\int_0^t\int_\Omega\big(r(\bar q)\cdot c
  - r(\widehat{\bar q})\cdot\widehat c 
  - r(\widehat{\bar q})\cdot(c-\widehat c)
  - (r(\bar q) - r(\widehat{\bar q}))
  \cdot\widehat c\big)dxds \nonumber \\
  &= I_1+I_2+I_3, \nonumber 
\end{align}
where $r(\bar q)=(r_{n+1},\ldots,r_{n+g})(\bar q)$ and 
\begin{align*}
  I_1 &= -\int_0^t\int_\Omega(\na(h_2'(\bar q)
  - h_2'(\widehat{\bar q})))^T:\bar B(\bar q)
  \na(h_2'(\bar q) - h_2'(\widehat{\bar q}))dxds, \\
  I_2 &= -\int_0^t\int_\Omega\big[(\na(h_2'(\bar q)
  - h_2'(\widehat{\bar q})))^T:\bar B(\bar q)
  - \big(\na(h_2''(\widehat{\bar q})(\bar q-\widehat {\bar q}))\big)^T
  :\bar B(\widehat{\bar q})\big]\na h_2'(\widehat{\bar q})dxds, \\
  I_3 &= K\int_0^t\int_\Omega(r(\bar q) 
  - r(\widehat{\bar q}))\cdot(c-\widehat c)dxds.
\end{align*}

Before estimating the terms $I_1$, $I_2$, and $I_3$, we introduce the subspace 
\begin{align*}
  L := \mbox{span}\{\bar u\}^\perp\times\R^g 
  = \bigg\{y\in\R^{n+1+g}: \sum_{i=0}^n u_iy_i = 0\bigg\}.
\end{align*}
Let $\bar y\in L$ and set $z_i=u_iy_i$ for $i=0,\ldots,n$, $z_j=y_j$ for $j=n+1,\ldots,n+g$. Then $\bar z=(z_0,\ldots,z_{n+g})\in Z$, and Lemma \ref{lem.hA1}, applied to $h_2$, implies that
\begin{align*}
  \bar y^T\bar B(\bar q)\bar y 
  \ge \sum_{i=0}^n\frac{\alpha_i}{2} u_i^{m_i}y_i^2
  + \gamma(K)\sum_{j=1}^g y_{n+j}^2.
\end{align*}
Furthermore, we introduce the projection operator $\Pi:\R^{n+1+g}\to L$ by
\begin{align*}
  \Pi(\bar y) = (\Pi_1(\bar y),\Pi_2(\bar y))
  = \bigg(\bigg(y_i - \sum_{k=0}^n u_ky_k\bigg)_{i=0}^n,
  (y_j)_{j=n+1}^{n+g}\bigg)^T
  = \bar y - \sum_{k=0}^n u_ky_k
  \begin{pmatrix} \mathbf{1} \\ \mathbf{0} \end{pmatrix},
\end{align*}
where $\mathbf{1}\in\R^{n+1}$ and $\mathbf{0}\in\R^g$. Since $\mathbf{1}^T\bar Q^{11}=0$ and $\mathbf{1}^T\bar Q^{12}=0$, we have
\begin{align*}
  \Pi(\bar y)^T\bar A(\bar q) = \bar y^T \bar A(\bar q).
\end{align*}

We continue with the estimation of $I_1$. Set $\bar z = \na( h_2'(\bar q) - h_2'(\widehat{\bar q}))$. Then $z_i=\na\log(u_i/\widehat u_i)$ for $i=0,\ldots,n$. We have
\begin{align}\label{4.zPi}
  \bar z-\Pi(\bar z) = \bigg(\bigg(\sum_{k=0}^n
  u_kz_k\bigg)_{i=0}^n,0,\ldots,0\bigg)\in 
  \mbox{span}\{(\mathbf{1},\mathbf{0})^T\},
\end{align}
and consequently, $(\bar z-\Pi(\bar z))^T\bar B(\bar q)=0$. This shows that
\begin{align*}
  I_1 &= -\int_0^t\int_\Omega\bar z^T:\bar B(\bar q)\bar z dxds 
  = -\int_0^t\int_\Omega \Pi(\bar z)^T:\bar B(\bar q)\bar zdxds \\
  &= -\int_0^t\int_\Omega\Pi(\bar z)^T:\bar B(\bar q)\Pi(\bar z)dxds
  - \int_0^t\int_\Omega\Pi(\bar z)^T:\bar B(\bar q)
  (\bar z - \Pi(\bar z))dxds \\
  &\le -\int_0^t\int_\Omega\bigg(
  \sum_{i=0}^n\frac{\alpha_i}{2} u_i^{m_i}
  |\Pi(\bar z)_i|^2 +\gamma(K)
  \sum_{j=1}^g|\na(c_j-\widehat c_j)|^2\bigg)dxds \\
  &\phantom{xx}- \int_0^t\int_\Omega\sum_{i,j=0}^n \Pi(\bar z)_i
  \big(\alpha_i u_i^{m_i}\delta_{ij} - u_i\alpha_ju_j^{m_j}\big)
  \bigg(\sum_{k=0}^n u_kz_k\bigg) dxds.
\end{align*}
Since $\Pi(\bar z)\in L$, we see from the identity $\sum_{i=0}^n\Pi(\bar z)_iu_i=0$ that the $u_i\alpha_ju_j^{m_j}$ term is eliminated. It follows from $\sum_{k=0}^nu_k\na\log u_k = \sum_{k=0}^n \na u_k=0$ that
\begin{align}\label{4.uz}
  \sum_{k=0}^n u_kz_k = \sum_{k=0}^n u_k\na\log\frac{u_k}{\widehat u_k}
  = -\sum_{k=0}^nu_k\na\log\widehat u_k
  = \sum_{k=0}^n(\widehat u_k-u_k)\na\log\widehat u_k.
\end{align}
Hence,
\begin{align*}
  I_1 &\le -\int_0^t\int_\Omega\bigg(
  \sum_{i=0}^n\frac{\alpha_i}{2} u_i^{m_i}
  |\Pi(\bar z)_i|^2 + \gamma(K)\sum_{j=1}^g
  |\na(c_j-\widehat c_j)|^2\bigg)dxds \\
  &\phantom{xx}- \int_0^t\int_\Omega\sum_{i=0}^n
  \alpha_i\Pi(\bar z)_iu_i^{m_i}
  \bigg(\sum_{k=0}^n(\widehat u_k-u_k)\na\log\widehat u_k\bigg)dxds.
\end{align*}
Observing that $\Pi(\bar z)_i=\na\log(u_i/\widehat u_i)-\sum_{k=0}^n u_k\na\log(u_k/\widehat u_k)$ for $i=0,\ldots,n$ and applying Young's inequality, we obtain
\begin{align*}
  I_1 &\le -\int_0^t\int_\Omega
  \sum_{i=0}^n\frac{3\alpha_i}{8} u_i^{m_i}|\Pi(\bar z)_i|^2 dxds
  - \gamma(K)\int_0^t\int_\Omega\sum_{j=1}^g
  |\na(c_j-\widehat c_j)|^2\bigg)dxds \\
  &\phantom{xx}+ C\int_0^t\int_\Omega
  \sum_{k=0}^n(u_k-\widehat u_k)^2|\na\log\widehat u_k|^2 dxds.
\end{align*}

The term $I_2$ is decomposed into several parts. We write $h_2(u)=\sum_{i=0}^n u_i(\log u_i-1)$. Observing that $\bar B^{21}=0$ (with a notation similar as in \eqref{3.barQ}), 
we find that $I_2=I_{21}+I_{22}+I_{23}$, where
\begin{align*}
  I_{21} &= -\int_0^t\int_\Omega\big[
  (\na(h_2'(u)-h_2'(\widehat u)))^T
  :\bar B^{11}(\bar q) - (\na(h_2''(\widehat{u})
  (u-\widehat u)))^T:\bar B^{11}(\widehat{\bar q})\big]
  \na h_2'(\widehat{\bar q})dxds, \\
  I_{22} &= -\int_0^t\int_\Omega\big[
  (\na(h_2'(u)-h_2'(\widehat u)))^T
  :\bar B^{12}(\bar q) - (\na(h_2''(\widehat{ u})
  (u-\widehat u)))^T:\bar B^{12}(\widehat{\bar q})\big]
  K\na\widehat c dxds, \\
  I_{23} &= -\int_0^t\int_\Omega\bigg(K(\na(c-\widehat c))^T
  :\bar B^{22}(\bar q) - K(\na(c-\widehat c))^T:
  \bar B^{22}(\widehat{\bar q})\bigg)K\na\widehat c dxds.
\end{align*} 
Since $\bar B^{22}_{ij}(\bar q)=(D_i/K)\delta_{ij}$, the last term vanishes, $I_{23}=0$. We infer from \eqref{4.uz} and definition \eqref{3.barQ} of $\bar Q^{11}$ that
\begin{align*}
  I_{21} &= -\sum_{i,j=0}^n\int_0^t\int_\Omega\bigg(
  u_j\bar Q_{ij}^{11}(\bar q)\na\log\frac{u_i}{\widehat u_i}
  - \widehat u_j\bar Q_{ij}^{11}(\widehat{\bar q})
  \na\bigg(\frac{1}{\widehat u_i}(u_i-\widehat u_i)\bigg)
  \bigg)\cdot\na\log \widehat u_j dxds \\
  &= -\sum_{i,j=0}^n\int_0^t\int_\Omega\bigg(
  u_j\bar Q_{ij}^{11}(\bar q)\na\log\frac{u_i}{\widehat u_i}
  - \widehat u_j\bar Q_{ij}^{11}(\widehat{\bar q})
  \frac{u_i}{\widehat u_i}
  \na\log\frac{u_i}{\widehat u_i}\bigg)
  \cdot\na\log \widehat u_j dxds \\
  &= -\sum_{i,j=0}^n\int_0^t\int_\Omega\bigg(
  (\alpha_iu_i^{m_i-1}\delta_{ij} - u_i\alpha_j u_j^{m_j-1})
  \frac{u_j}{\widehat u_j} - (\alpha_i\widehat u_i^{m_i-1}\delta_{ij}
  - \widehat u_i\alpha_j \widehat u_j^{m_j-1})
  \frac{u_i}{\widehat u_i}\bigg) \\
  &\phantom{xx}\times
  \na\log\frac{u_i}{\widehat u_i}\cdot\na \widehat u_j dxds \\
  &= -\sum_{i=0}^n\int_0^t\int_\Omega\alpha_i u_i
  (u_i^{m_i-1}-\widehat u_i^{m_i-1})
  \na\log\frac{u_i}{\widehat u_i}\cdot\na\log\widehat u_i dxds \\
  &\phantom{xx}+ \sum_{i,j=0}^n\int_0^t\int_\Omega
  \alpha_j u_i(u_j^{m_j}-\widehat u_j^{m_j})
  \na\log\frac{u_i}{\widehat u_i}\cdot\na\log \widehat u_j dxds.
\end{align*}
We split the term $\na\log(u_i/\widehat u_i)$ into two parts, combining \eqref{4.zPi} and \eqref{4.uz} for $i=0,\ldots,n$:
\begin{align}\label{4.nalog}
  \na\log\frac{u_i}{\widehat u_i}
  &= \Pi_1\bigg(\na\log\frac{u}{\widehat u}\bigg)_i 
  + (I-\Pi_1)\bigg(\na\log\frac{u}{\widehat u}\bigg)_i \\
  &= \Pi_1\bigg(\na\log\frac{u}{\widehat u}\bigg)_i 
  + \sum_{k=0}^n(\widehat u_k-u_k)\na\log\widehat u_k, \nonumber 
\end{align}
where $\log(u/\widehat u)$ denotes the vector $(\log(u_i/\widehat u_i))_{i=0}^n$ and recalling that $\Pi_1(\bar y)_i=y_i - \sum_{k=0}^n u_ky_k$ for $i=0,\ldots,n$. Since $\Pi_1(\na\log(u/\widehat u),c)\in L$, we have $\sum_{i=0}^n u_i\Pi_1(\na\log(u/\widehat u)_i)=0$ and therefore,
\begin{align*}
  I_{21} &= -\sum_{i=0}^n\int_0^t\int_\Omega\alpha_i u_i
  (u_i^{m_i-1}-\widehat u_i^{m_i-1})\na\log\widehat u_i\cdot
  \Pi_1\bigg(\na\log\frac{u}{\widehat u}\bigg)_i dxds \\
  &\phantom{xx}- \sum_{i=0}^n\int_0^t\int_\Omega\alpha_i u_i
  (u_i^{m_i-1}-\widehat u_i^{m_i-1})\na\log\widehat u_i\cdot
  \bigg(\sum_{k=0}^n(\widehat u_k-u_k)\na\log\widehat u_k\bigg)dxds \\
  &\phantom{xx}+ \int_0^t\int_\Omega
  \bigg(\sum_{j=0}^n\alpha_j(u_j^{m_j}-\widehat u_j^{m_j})
  \na\log\widehat u_j\bigg)
  \cdot\bigg(\sum_{i=0}^n(\widehat u_i-u_i)\na\log\widehat u_i\bigg)
  dxds.
\end{align*}
We recall the definition $z_i=\na\log(u_i/\widehat u_i)$ for $i=0,\ldots,n$ and apply Young's inequality to infer that
\begin{align*}
  I_{21} &\le \int_0^t\int_\Omega
  \sum_{i=0}^n\frac{\alpha_i}{8}
  u_i^{m_i}|\Pi(\bar z)_i|^2 dxds
  + C\int_0^t\int_\Omega\sum_{i=0}^n
  |u_i^{m_i-1}-\widehat u_i^{m_i-1}|^2|\na\log\widehat u_i|^2 dxds \\
  &\phantom{xx}+ C\int_0^t\int_\Omega\sum_{i=0}^n
  \big(|u_i-\widehat u_i|^2|\na\log\widehat u_i|^2 
  + |u_i^{m_i}-\widehat u_i^{m_i}|^2|\na\log\widehat u_i|^2 \big)dxds.
\end{align*}
Since $\widehat u_i$ is assumed to be strictly positive, $\widehat u_i\ge\delta>0$, we can estimate $|u_i^{m_i-1}-\widehat u_i^{m_i-1}|\le C(\delta)|u_i-\widehat u_i|$ and $|u_i^{m_i}-\widehat u_i^{m_i}|\le C|u_i-\widehat u_i|$, which leads to
\begin{align*}
  I_{21} \le \int_0^t\int_\Omega\sum_{i=0}^n\frac{\alpha_i}{8}
  u_i^{m_i}|\Pi(z)_i|^2 dxds
  + C\int_0^t\int_\Omega\sum_{i=0}^n
  |u_i-\widehat u_i|^2|\na\log\widehat u_i|^2 dxds.
\end{align*}

Next, we estimate $I_{22}$ by substituting definition \eqref{3.barQ} of $\bar Q^{12}$ and using \eqref{4.uz}:
\begin{align*}
  I_{22} &= -\sum_{i=0}^n\sum_{k=1}^g\int_0^t\int_\Omega
  u_i\bigg(\chi_{ik}-\sum_{j=0}^n u_j\chi_{jk}\bigg)
  \na\widehat c_k\cdot\nabla\log\frac{u_i}{\widehat u_i}dxds \\
  &\phantom{xx}+ \sum_{i=0}^n\sum_{k=1}^g\int_0^t\int_\Omega
  u_i\bigg(\chi_{ik}-\sum_{j=0}^n\widehat u_j\chi_{jk}\bigg)
  \na\widehat c_k\cdot\nabla\log\frac{u_i}{\widehat u_i}dxds \\
  &= \int_0^t \int_\Omega \bigg( \sum_{i=0}^n(\widehat u_i -u_i)
  \na\log\widehat u_i\bigg) \cdot \sum_{j=0}^n\sum_{k=1}^g
  \chi_{jk}(u_j - \widehat u_j)\nabla \widehat c_k dxds.
\end{align*}
Since $\widehat u \in L^\infty(\Omega_T)$ and $\widehat c^0\in W^{1,q}(\Omega)$ with $q>d$, parabolic regularity yields $\widehat c \in L^\infty(0,T;$ $W^{2,q}(\Omega))\hookrightarrow L^\infty(0,T;W^{1,\infty}(\Omega))$. Then we infer from Young's inequality that
\begin{align*}
    I_{22} \leq \int_0^t \int_\Omega\sum_{i=0}^n|u_i - \widehat u_i|^2 |\nabla \log \widehat u_i|^2 dx ds + C \int_0^t\int_\Omega \sum_{i=0}^n |u_i - \widehat u_i|^2 dx ds.
\end{align*}

For the term $I_3$, we substitute the explicit form of $r(q)$ and apply Young's inequality again. This leads to
\begin{align*}
  I_3 &= K\int_0^t\int_\Omega\sum_{j=1}^g\bigg(
  -\lambda_j(c_j-\widehat c_j)^2 + \sum_{i=0}^n b_{ji}(u_i-\widehat u_i)
  (c_j-\widehat c_j)\bigg)dxds \\
  &\le CK\int_0^t\int_\Omega\sum_{i=0}^n |u_i-\widehat u_i|^2dxds
  - CK\int_0^t\int_\Omega\sum_{j=1}^g |c_j-\widehat c_j|^2dxds.
\end{align*}
We combine the estimates for $I_1$, $I_2$, and $I_3$ to conclude from \eqref{4.I123} that
\begin{align*}
  \int_\Omega& h(\bar{q}|\widehat{\bar q})(t)dx - \int_\Omega h(\bar{q}|\widehat{\bar q})(0)dx
  + CK\int_0^t\int_\Omega\sum_{j=1}^g|c_j-\widehat c_j|^2 dxds \\
  &\phantom{xx} + \int_0^t\int_\Omega
  \sum_{i=0}^n\frac{\alpha_i}{8} u_i^{m_i}|\Pi(z)_i|^2 dxds
  + \gamma(K)\int_0^t\int_\Omega 
  \sum_{j=1}^g|\na(c_j-\widehat c_j)|^2 dxds \\
  &\le C(K+1)\int_0^t\int_\Omega\sum_{i=0}^n|u_i-\widehat u_i|^2 dxds 
  + C\int_0^t\int_\Omega\sum_{j=1}^g|\na(c_j-\widehat c_j)|^2 dxds.
\end{align*}
We choose $K$ sufficiently large such that the last term on the right-hand side is absorbed by the corresponding term on the left-hand side. This is possible since $\gamma(K) \to \infty$ as $K\to\infty$. It follows that
\begin{align*}
  \int_\Omega h(\bar{q}|\widehat{\bar q})(t)dx 
  - \int_\Omega h(\bar{q}|\widehat{\bar q})(0)dx
  \le C(K+1)\int_0^t\int_\Omega\sum_{i=0}^n|u_i-\widehat u_i|^2 dxds .
\end{align*}
The inequality $u_i\log(u_i/\widehat u_i) - u_i + \widehat u_i \ge \frac12|u_i-\widehat u_i|^2$ for $u_i$, $\widehat u_i\in[0,1]$, proven in \cite[Lemma 16]{HJT22}, implies that
\begin{align}\label{5.L2}
  \frac12\sum_{i=0}^n\int_\Omega|u_i-\widehat u_i|^2 dx
  + \frac{K}{2}\sum_{j=1}^g\int_\Omega|c_j-\widehat c_j|^2 dx
  \le \int_\Omega h(\bar q|\widehat{\bar q})dx.
\end{align}
This shows that
\begin{align*}
  \int_\Omega h(\bar{q}|\widehat{\bar q})(t)dx 
  \le \int_\Omega h(\bar{q}|\widehat{\bar q})(0)dx
  + C(K+1)\int_0^t\int_\Omega h(\bar{q}|\widehat{\bar q})(s)dxds .
\end{align*}
We deduce from Gronwall's inequality that
\begin{align*}
  \int_\Omega h(\bar{q}|\widehat{\bar q})(t)dx 
  \le e^{C(K+1)t}\int_\Omega h(\bar{q}|\widehat{\bar q})(0)dx,
\end{align*}
and since the initial data coincide, $h(\bar{q}|\widehat{\bar q})(0)=0$, we obtain $h(\bar{q}|\widehat{\bar q})(t)=0$ and consequently $u(t)=\widehat u(t)$, $c(t)=\widehat c(t)$ in $\Omega$ for $t>0$, concluding the proof.


\section{Proof of the exponential decay of solutions}\label{sec.time}
We wish to use the test function $h_2'(\bar q)=(\log u_0,\ldots,\log u_n,Kc_1,\ldots,Kc_g)$ in the weak formulation of \eqref{1.u}--\eqref{1.c}. Since $u_i$ may vanish, we need to regularize and choose the test function $(\log(u_0+\eps),\ldots,\log(u_n+\eps),Kc_1,\ldots,Kc_g)$ with $\eps>0$. A computation and the limit $\eps\to 0$, similarly as in \cite[Sec.~3.1]{ChJu19}, then leads for $0<s<t<T$ to
\begin{align*}
  \int_\Omega& h(\bar{q}(t)|\bar{q}^\infty)dx 
  - \int_\Omega h(\bar{q}(s)|\bar{q}^\infty)dx \\
  &= -\sum_{i,j=0}^n\int_s^t\int_\Omega
  \bar Q_{ij}^{11}\nabla\log u_i\cdot\nabla u_jdxd\sigma
  -\sum_{i=0}^n\sum_{j=1}^g\int_s^t\int_\Omega
  \bar Q_{ij}^{12}\nabla\log u_i\cdot\nabla c_jdxd\sigma \\
  &\phantom{xx}-K\sum_{i,j=1}^g\int_s^t\int_\Omega
  \bar Q_{ij}^{22}\nabla c_i\cdot\nabla c_j dxd\sigma 
  + K\int_s^t\int_\Omega(r(\bar q)-r(\bar{q}^\infty))
  \cdot(c-c^\infty)dxd\sigma \\
  &= -\int_s^t\int_\Omega\bigg(\sum_{i=0}^n \alpha_i u_i^{m_i-2}
  |\na u_i|^2 + \sum_{i=0}^n\sum_{j=1}^g\chi_{ij}\na u_i\cdot\na c_j
  + K\sum_{j=1}^g D_j|\na c_j|^2\bigg)dxd\sigma \\
  &\phantom{xx}+ K\int_s^t\int_\Omega\sum_{j=1}^g\bigg(
  \sum_{k=0}^n b_{jk}(u_k-u_k^\infty) 
  - \lambda_j(c_j-c_j^\infty)\bigg)(c_j-c_j^\infty)dxd\sigma.
\end{align*}
We use $\na u_0=-\sum_{i=1}^n\na u_i$ to find that
\begin{align*}
  \sum_{i=0}^n\chi_{ij}\na u_i\cdot\na c_j
  = \sum_{i=1}^n(\chi_{ij}-\chi_{0j})\na u_i\cdot\na c_j.
\end{align*}
Then we apply Young's inequality with $\eps>0$ to this expression and the property $u_i^{m_i-2}\ge 1$ (since $1\le m_i\le 2$):
\begin{align}\label{4.aux}
  \int_\Omega& h(\bar{q}(t)|\bar{q}^\infty)dx 
- \int_\Omega h(\bar{q}(s)|\bar{q}^\infty)dx \\
  &\le \int_s^t\int_\Omega\bigg(
  \sum_{i=1}^n\bigg(\frac{\eps}{2}-\alpha_i\bigg)|\na u_i|^2
  + \sum_{j=1}^g\bigg(\frac{\|\chi'\|_2^2}{2\eps} - KD_j\bigg)
  |\na c_j|^2\bigg)dxd\sigma \nonumber \\
  &\phantom{xx}+ K\int_s^t\int_\Omega\sum_{j=1}^g\bigg(
  \sum_{k=0}^n b_{jk}(u_k-u_k^\infty) 
  - \lambda_j(c_j-c_j^\infty)\bigg)(c_j-c_j^\infty)dxd\sigma, \nonumber 
\end{align}
where $\|\chi'\|_2$ is the spectral norm of the matrix $\chi'=(\chi_{ij}-\chi_{0j})_{ij}$. To estimate the last term, we write
\begin{align*}
  \sum_{i=0}^n b_{ji}(u_i-u_i^\infty)
  = \sum_{i=1}^n(b_{ji}-b_{j0})(u_i-u_i^\infty).
\end{align*}
Thus, by Young's inequality with $\delta>0$:
\begin{align*}
  K&\int_s^t\int_\Omega\sum_{j=1}^g\bigg(
  \sum_{k=0}^n b_{jk}(u_k-u_k^\infty) 
  - \lambda_j(c_j-c_j^\infty)\bigg)(c_j-c_j^\infty)dxd\sigma \\
  &\le \int_s^t\int_\Omega\bigg(
  \delta\sum_{i=1}^n(u_i-u_i^\infty)^2 
  + K\sum_{j=1}^g\bigg(\frac{\|b'\|_2^2}{4\delta}K - \lambda_j\bigg)
  (c_j-c_j^\infty)^2\bigg)dxd\sigma,
\end{align*}
where $b'=(b_{ji}-b_{j0})_{ij}$. Hence, we obtain from \eqref{4.aux} and the Poincar\'e inequality with constant $C_P>0$ (if $\eps/2<\alpha_i$ for $i=1,\ldots,n$) that
\begin{align*}
  \int_\Omega& h(\bar{q}(t)|\bar{q}^\infty)dx 
  - \int_\Omega h(\bar{q}(s)|\bar{q}^\infty)dx \\
  &\le \int_s^t\int_\Omega\bigg(
  \sum_{i=1}^n\bigg(\frac{\eps}{2}-\alpha_i\bigg)
  C_P^{-2}(u_i-u_i^\infty)^2 
  + \bigg(\frac{\|\chi'\|_2^2}{2\eps} - KD_*\bigg)
  \sum_{j=1}^g|\na c_j|^2\bigg)dxd\sigma \\
  &\phantom{xx}+ \int_s^t\int_\Omega\bigg(
  \delta\sum_{i=1}^n(u_i-u_i^\infty)^2 
  + K\bigg(\frac{\|b'\|_2^2}{4\delta}K-\lambda_*\bigg)
  \sum_{j=1}^g(c_j-c_j^\infty)^2\bigg)dxd\sigma.
\end{align*}

Next, we choose $\delta=\eps/(2C_P^2)$, $K=\|\chi'\|_2^2/(2\eps D_*)$. Then
\begin{align*}
  \int_\Omega h(\bar{q}(t)|\bar{q}^\infty)dx 
  &\le \int_\Omega h(\bar q(s)|\bar q^\infty)dx
  + \frac{\eps-\alpha_*}{C_P^{2}}\int_s^t\int_\Omega
  \sum_{i=1}^n(u_i-u_i^\infty)^2 dxd\sigma \\
  &\phantom{xx}+ K\bigg(\frac{C_P^2\|b'\|_2^2\|\chi'\|_2^2}{4\eps^2
  D_*} - \lambda_*\bigg)\int_s^t\int_\Omega\sum_{j=1}^g
  (c_j-c_j^\infty)^2 dxd\sigma.
\end{align*}
Dividing this inequality by $t-s$ and passing to the limit $s\to t$, we arrive at
\begin{align*}
  \frac{d}{dt}&\int_\Omega h(\bar{q}|\bar{q}^\infty)dx
  \le -\frac{\alpha_*-\eps}{C_P^2}\int_\Omega
  \sum_{i=1}^n(u_i-u_i^\infty)^2 dx \\
  &- K\bigg(\lambda_* - \frac{C_P^2\|b'\|_2^2\|\chi'\|_2^2}{4\eps^2
  D_*}\bigg)\int_\Omega\sum_{j=1}^g(c_j-c_j^\infty)^2 dx
  \le -\mu\int_\Omega h(\bar q|\bar q^\infty)dx,
\end{align*}
where we used in the next-to-last step the inequality
\begin{align*}
  \frac{1}{2} \sum_{i = 0}^{n} 
  \| u_i - u^\infty_i \|_{L^{2}(\Omega)}^2
  \leq \int_{\Omega} \sum_{i = 0}^{n} 
  \bigg( u_i \log \frac{u_i}{{u}^\infty_i} - u_i + {u}^\infty_i\bigg)dx
  \leq C_u \sum_{i = 1}^{n} \| u_i - {u}^\infty_i \|_{L^{2}(\Omega)}^2,
\end{align*}
and we have set
\begin{align*}
  \mu = \min\bigg\{\frac{\alpha_*-\eps}{C_P^2 C_u},
  2\bigg(\lambda_*-\frac{C_P^2\|b'\|_2^2\|\chi'\|_2^2}{4\eps^2 D_*}
  \bigg)\bigg\}>0.
\end{align*}
Gronwall's lemma implies that
\begin{align*}
  \int_\Omega h(\bar q(t)|\bar q^\infty)dx
  \le e^{-\mu t}\int_\Omega h(\bar q(0)|\bar q^\infty)dx
  \quad\mbox{for }t>0,
\end{align*}
and inequality \eqref{5.L2} completes the proof.


\section{Proof of the vanishing diffusion limit}\label{sec.lim}

Let $q^\eps=(u^\eps,c^\eps)$ be a weak solution to \eqref{1.u}--\eqref{1.bc} with $D_j=\eps$ for $j=1,\ldots,g$. We set $\bar q^\eps=(u_0^\eps,u^\eps,c^\eps)$. We conclude from $\pa_t c_j^\eps - D_j\Delta c_j^\eps + \lambda_j c_j^\eps\in L^2(\Omega_T)$ and parabolic regularity that $c^\eps$ is a strong solution to \eqref{1.c}. The entropy density $h_3$, defined in \eqref{1.h3}, can be written as
\begin{align*}
  h_3(\bar q^\eps) = h_2(\bar q^\eps) 
  - \frac{K}{2}\sum_{j=1}^g (c_j^\eps)^2
  + \frac{K}{2}\sum_{j=1}^g|\na c_j^\eps|^2 + n + 1.
\end{align*}
We compute
\begin{align*}
  \frac12\frac{d}{dt}\int_\Omega|\na c_j^\eps|^2 dx
  &= -\int_\Omega\pa_t c_j^\eps\Delta c_j^\eps dx \\
  &= -\int_\Omega(\eps(\Delta c_j^\eps)^2 + \lambda_j|\na c_j^\eps|^2)dx
  + \int_\Omega\sum_{k=0}^n b_{jk}\na u_k^\eps\cdot\na c_j^\eps dx.
\end{align*}
Hence, taking into account the entropy inequality \eqref{4.ei} for $h_2$, applying Young's inequality to the mixed term $\na u_k^\eps\cdot\na c_j^\eps$, and using $(u_i^\eps)^{m_i-2}\ge 1$ as well as $\sum_{k=0}^n b_{jk}\nabla u_k^\eps
=\sum_{k=1}^n (b_{jk}-b_{j0})\na u_k^\eps$, we obtain the following entropy inequality for $h_3$:
\begin{align*}
  \int_\Omega &h_3(\bar q^\eps(t))dx
  + \int_0^t\int_\Omega\bigg(\sum_{i=0}^n
  \frac{\alpha_i}{2}|\na u_i^\eps|^2
  + K\eps\sum_{j=1}^g(\Delta c_j^\eps)^2\bigg)dxds \\
  &\le \int_\Omega h_3(\bar q(0))dx 
  + C(b,\chi)\int_0^t\int_\Omega\sum_{j=1}^g|\na c_j^\eps|^2 dxds \\
  &\le \int_\Omega h_3(\bar q(0))dx 
  + C(b,\chi)\int_0^t\int_\Omega h_3(\bar q^\eps)dxds.
\end{align*}
An application of Gronwall's lemma shows that there exists $C>0$ independent of $\eps$ such that
\begin{align*}
  \|u_i^\eps\|_{L^\infty(\Omega_T)}
  + \|u_i^\eps\|_{L^2(0,T;H^1(\Omega))}
  + \|\na c_j^\eps\|_{L^\infty(0,T;L^2(\Omega))} 
  + \sqrt\eps\|\Delta c_j^\eps\|_{L^2(\Omega_T)} \le C,
\end{align*}
where $i=0,\ldots,n$, $j=1,\ldots,g$. Observe that the estimate for $u_0^\varepsilon$ follows from the volume-filling constraint. The $L^\infty(\Omega_T)$ bound for $(u_i^\eps)$ implies an $L^\infty(0,T;L^2(\Omega))$ bound for $c_j^\eps$. This yields a uniform $L^\infty(0,T;H^1(\Omega))$ bound for $c_j^\eps$. From these estimates, we infer uniform bounds for $\pa_t u_i^\eps$ and $\pa_t c_j^\eps$ in $L^2(0,T;H^1(\Omega)')$. Thus, we can extract a subsequence (not relabeled) such that
\begin{align*}
  u_i^\eps\to u_i, \quad c_j^\eps\to c_j
  \quad\mbox{strongly in }L^2(\Omega_T),\ i=0,\ldots,n,\ j=1,\ldots,g.
\end{align*}
Moreover, we have the weak convergences
\begin{align*}
  \na u_i^\eps\rightharpoonup\na u_i, \quad
  \na c_j^\eps\rightharpoonup\na c_j
  &\quad\mbox{weakly in }L^2(\Omega_T), \\
  \pa_t u_i^\eps\rightharpoonup\pa_t u_i, \quad
  \pa_t c_j^\eps\rightharpoonup\pa_t c_j
  &\quad\mbox{weakly in }L^2(0,T;H^1(\Omega)'), \\
  \eps\Delta c_j^\eps\to 0 &\quad\mbox{strongly in }L^2(\Omega_T).
\end{align*}
Thanks to the $L^\infty(\Omega_T)$ bound for $u_i^\eps$, the family $(u_i^\eps)^{m_i-1}$ converges strongly in any $L^r(\Omega_T)$ with $r<\infty$. Thus, we can pass to the limit $\eps\to 0$ in \eqref{1.u}--\eqref{1.bc} with $D_j=\eps$ to find that $(u,c)$ solves \eqref{1.lim}. 


\begin{appendix}
\section{Formal derivation of the model}\label{sec.deriv}

We derive model \eqref{1.u}--\eqref{1.c} from mass and force balance equations using the multiphase approach of \cite{LKBJS06}. Let $u_i$ and $v_i$ be the volume fraction and velocity of the $i$th phase for $i=0,\ldots,n$. We assume that the phases are incompressible and that the densities are constant and have the same value, since the cell components mainly consist of water. Neglecting inertia effects, the mass and momentum balance equations then read as
\begin{align}\label{2.mmb}
  \pa_t u_i + \diver(u_iv_i) = 0, \quad
  0 = -\diver(u_ip_i(u)I) + u_iF_i, \quad i=0,\ldots,n,
\end{align}
where $p_i(u)$ is the phase-specific pressure and $F_i$ is the interaction force of the $i$th phase. We suppose that the barycentric velocity vanishes; this implies that $\sum_{i=0}^n u_iv_i=0$. Thus, if the initial fractions satisfy $\sum_{i=0}^n u_i(0)=1$, a summation of the first equation in \eqref{2.mmb} leads to $\pa_t\sum_{i=0}^n u_i=0$ and hence to $\sum_{i=0}^n u_i(t)=1$ for all times $t\ge 0$. Conversely, if the volume-filling constraint $\sum_{i=0}^n u_i=1$ holds then the barycentric velocity $\sum_{i=0}^n u_iv_i$ is divergence-free and the fluid is incompressible. In this sense, the volume-filling constraint is equivalent to the incompressibility of the mixture.

We specify the pressures and forces. The pressure $p_i$ is the sum of the overall pressure $p$ common to all mixture components and the intraphase pressure $\pi_i(u)$:
\begin{align*}
  p_i = p + \pi_i(u), \quad \pi_i(u) = \frac{\alpha_i}{m_i} u_i^{m_i-1}
  \quad\mbox{for }i=0,\ldots,n,
\end{align*}
where $m_i\ge 1$ is a porous-medium-type exponent and $\alpha_i>0$ for $i=1,\ldots,n$, $\alpha_0\ge 0$. The interaction force is the sum of the interphase forces $f_{ij}$, the viscous drag forces $g_{ij}$, and the chemotactic forces $k_i$. Following \cite{LKBJS06} and neglecting the pressure due to the traction between the phases, the interphase forces read as $f_{ij} = p(u_j\na u_i - u_i\na u_j)$. Then, taking into account that $\sum_{i=0}^n u_i=1$ and $\sum_{i=0}^n\na u_i=0$,
\begin{align}\label{2.sumf}
  \sum_{j=0}^n f_{ij} = p\na u_i.
\end{align}
The drag forces take the form $g_{ij} = -b_{ij}u_iu_j(v_i-v_j)$, where $b_{ij}=b_{ji}$ are some parameters. We simplify the equations by assuming that $b_{ij}=1$ and using $\sum_{j=0}^n u_jv_j=0$, which leads to
\begin{align}\label{2.sumg}
  \sum_{j=0}^n g_{ij} = -u_iv_i\sum_{j=0}^n u_j 
  + u_i\sum_{j=0}^n u_jv_j = -u_iv_i.
\end{align}
To determine the chemotactic force, let $c_1,\ldots,c_g$ be chemical concentrations solving 
\begin{align}\label{2.c}
  \pa_t c_j = D_j\Delta c_j - \lambda_jc_j 
  + \sum_{k=0}^n b_{jk}u_k, \quad j=1,\ldots,g,
\end{align}
where $D_j$, $\lambda_j>0$ and $b_{jk}\ge 0$. The cell phase $u_i$ responds to a chemical cue with a force $f_i$, which acts on the constituents of the mixture and is proportional to the $i$th volume fraction and the concentration gradients $\na c_j$ for $j=1,\ldots,g$. The force is distributed among the constituents according to their volume fraction. Therefore, the chemotactic force is given by 
\begin{align}\label{2.k} 
  k_i = f_i - u_i\sum_{j=0}^n f_j, 
  \quad\mbox{where }f_i=-u_i\sum_{j=1}^g\chi_{ij}\na c_j.
\end{align}

Collecting identities \eqref{2.sumf}, \eqref{2.sumg}, and \eqref{2.k}, the interaction forces become
\begin{align*}
  u_iF_i = \sum_{j=0}^n(f_{ij}+g_{ij}) + k_i
  = p\na u_i - u_iv_i + f_i - u_i\sum_{j=0}^n f_j.
\end{align*}
We insert this expression into \eqref{2.mmb}. Then
\begin{align}\label{2.aux}
0&=-\nabla(u_i(p+\pi_i(u)))+p\nabla u_i-u_iv_i
+f_i-u_i\sum_{j=0}^n f_j\\
&=-u_i\nabla p-\nabla(u_i\pi_i(u))-u_iv_i
+f_i-u_i\sum_{j=0}^n f_j.\nonumber
\end{align}
The term $p\na u_i$ cancels, and after summing these equations over $i=0,\ldots,n$, we obtain $\na p = -\sum_{i=0}^n\na(u_i\pi_i(u))$. Thus, we can replace $\na p$ in \eqref{2.aux} to find that
\begin{align*}
  u_iv_i = -\na(u_i\pi_i(u)) + u_i\sum_{j=0}^n\na(u_j\pi_j(u))
  + f_i - u_i\sum_{j=0}^n f_j.
\end{align*}
We insert the flux $u_iv_i$ into the mass balance equation:
\begin{align*}
  & \pa_t u_i = \diver\bigg(\big(\na(u_i\pi_i(u))-f_i\big)
  - u_i\sum_{j=0}^n\big(\na(u_j\pi_j(u))-f_j\big)\bigg), \\
  & \mbox{where}\ f_i=-u_i\sum_{j=1}^g\chi_{ij}\na c_j, 
  \quad \pi_i(u)=\frac{\alpha_i}{m_i} u_i^{m_i-1}, \quad i=0,\ldots,n.
\end{align*}
Introducing $J_i=\na(u_i\pi_i(u))-f_i$, we recover equation \eqref{1.u}, while equation \eqref{1.c} for the chemical concentrations equals \eqref{2.c}.  

\end{appendix}


\end{document}